\newtheorem{thm}{Theorem}[section]
\newtheorem{lemma}[thm]{Lemma}
\theoremstyle{definition}
\newtheorem{remark}[thm]{Remark}
\numberwithin{equation}{section}
\def\Q{\mathfrak{Q}}
\def\Qk{\Q^{(k)}}
\def\Dk{\mathfrak{D}^{(k)}}
\def\ac{\alpha_c}
\def\eig#1#2{\lambda_{#1,#2}}
\def\eigone#1{\eig{1}{#1}}
\def\eigtwo#1{\eig{2}{#1}}
\def\R{\mathbb{R}}
\let\phi=\varphi
\let\epsilon=\varepsilon
\title{A uniqueness theorem for higher order anharmonic oscillators}
\author{S\o ren Fournais}
\author{Mikael Persson Sundqvist}
\address[S\o ren Fournais]{Aarhus University, Department of
  Mathematics, Ny Munkegade 181, 8000 Aarhus C, Denmark}
\email{fournais@imf.au.dk}
\address[Mikael Persson Sundqvist]{Lund University, Department of Mathematical 
Sciences, Lund, Sweden}
\email{mickep@maths.lth.se}
\subjclass{47A75; 47E05, 34L15, 34B08}
\keywords{Eigenvalue estimation, Anharmonic oscillator, Spectral parameter.}
\begin{document}

\begin{abstract}
We study for $\alpha\in\R$, $k \in {\mathbb N} \setminus \{0\}$ the family of 
self-adjoint operators 
\[
-\frac{d^2}{dt^2}+\Bigl(\frac{t^{k+1}}{k+1}-\alpha\Bigr)^2
\]
in $L^2(\R)$ and show that if $k$ is even then $\alpha=0$ gives the unique 
minimum of the lowest eigenvalue of this family of operators.
Combined with earlier results this gives that for any $k \geq 1$, the lowest 
eigenvalue has a unique minimum as a function of $\alpha$.
\end{abstract}

\maketitle

\section{Introduction}

\subsection{Definition of $\Qk(\alpha)$ and main result}
For any $k \in {\mathbb N}\setminus\{0\}$ and $\alpha\in\mathbb{R}$ we define 
the operator
\begin{equation*}
\Qk(\alpha) = -\frac{d^2}{dt^2}+\Bigl(\frac{t^{k+1}}{k+1}-\alpha\Bigr)^2,
\end{equation*}
as a self-adjoint operator in $L^2({\mathbb R})$. This family of operators is
connected with the study of Schr\"{o}dinger operators with a magnetic field 
vanishing along a curve and with the Ginzburg-Landau theory of superconductivity.
It first appeared in~\cite{mo} (for $k=1$) and was later studied 
in~\cite{hemo1,pakw,heko1,helf,hepe,dora,fope,fope2}.

We denote by $\bigl\{ \lambda_{j,\Qk(\alpha)}\bigr\}_{j=1}^{\infty}$ the 
increasing sequence of eigenvalues of $\Qk(\alpha)$. In particular, 
$\eigone{\Qk(\alpha)}$ is the ground state eigenvalue, and we denote by 
$u_{\alpha}$ the associated positive, $L^2$-normalized eigenfunction.

The main result of the present paper is the following theorem.

\begin{thm}\label{thm:main}
Assume that $k\geq 2$ is an even integer. Then $\eigone{\Qk(\alpha)}$ 
attains a unique minimum at $\alpha=0$.
Moreover, this minimum is non-degenerate.
\end{thm}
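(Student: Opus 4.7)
The plan is to identify $\alpha=0$ as a critical point by symmetry, then show it is a non-degenerate local minimum by second-order perturbation theory, and finally promote this to a global uniqueness statement. Because $k$ is even, $t^{k+1}$ is odd, so conjugation by the parity operator $P\colon u(t)\mapsto u(-t)$ gives $P\Qk(\alpha)P^{-1}=\Qk(-\alpha)$. Consequently $\lambda(\alpha):=\eigone{\Qk(\alpha)}$ is real-analytic and even, $\lambda'(0)=0$, and the positive ground state $u_0:=u_{\alpha=0}$ is even.

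For the non-degeneracy $\lambda''(0)>0$, I would decompose $\Qk(\alpha)=\Qk(0)-\tfrac{2\alpha}{k+1}t^{k+1}+\alpha^2$ and apply second-order perturbation theory. The first-order eigenfunction correction $u_0^{(1)}$ solves $(\Qk(0)-\lambda(0))u_0^{(1)}=\tfrac{2}{k+1}t^{k+1}u_0$ and is automatically orthogonal to $u_0$ by parity. The factorization ansatz $u_0^{(1)}=f\,u_0$, valid since $u_0>0$, reduces this to the explicit first-order ODE $(f'u_0^2)'=-\tfrac{2}{k+1}t^{k+1}u_0^2$, and a short integration by parts yields
\[
\lambda''(0)=2\Bigl(1-\int_{\R} f'(t)^2\,u_0(t)^2\,dt\Bigr),
\]
so non-degeneracy reduces to the weighted Dirichlet bound $\int(f')^2 u_0^2<1$. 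I would prove this via the closed form $f'(t)u_0(t)^2=-\tfrac{2}{k+1}\int_{-\infty}^t s^{k+1}u_0(s)^2\,ds$ together with moment control coming from the dilation virial identity applied to $\Qk(0)$.

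For uniqueness of the global minimum, the Feynman--Hellmann formula
\[
\lambda'(\alpha)=2\alpha-\tfrac{2}{k+1}\int_{\R} t^{k+1} u_\alpha(t)^2\,dt
\]
reduces matters to the strict moment inequality $\int t^{k+1}u_\alpha^2<(k+1)\alpha$ for every $\alpha>0$. My preferred strategy is to propagate the previous factorization: setting $\partial_\alpha u_\alpha=g_\alpha u_\alpha$ gives an analogous representation $\lambda''(\alpha)=2\bigl(1-\int(g_\alpha')^2 u_\alpha^2\,dt\bigr)$, so a uniform bound $\int(g_\alpha')^2 u_\alpha^2<1$ gives strict convexity of $\lambda$ on $\R$ and, combined with $\lambda'(0)=0$ and evenness, forces $\lambda'(\alpha)>0$ on $(0,\infty)$. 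The hard part, and main obstacle, is establishing this Dirichlet bound \emph{uniformly} in $\alpha$: away from $\alpha=0$ the ground state $u_\alpha$ concentrates near the well minimum $T_\alpha=((k+1)\alpha)^{1/(k+1)}$ in a strongly asymmetric profile, so sharp control of $u_\alpha$ and of the moments defining $g_\alpha'$ is required. A fallback is to rule out nontrivial critical points $\alpha_*$ by combining the Feynman--Hellmann consequence $\int t^{k+1}u_{\alpha_*}^2=(k+1)\alpha_*$ with additional virial identities (e.g.\ from commuting $\Qk(\alpha_*)$ with the derivative), seeking a contradiction via positivity of $u_{\alpha_*}$.
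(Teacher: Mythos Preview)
Your strategy is conceptually clean but is genuinely incomplete, and differs markedly from what the paper does. The two reductions you write down are correct: the formula $\lambda''(0)=2\bigl(1-\int (f')^2u_0^2\bigr)$ and its analogue at general $\alpha$ are valid. The gap is that you never prove either weighted Dirichlet inequality. For the non-degeneracy at $\alpha=0$ you gesture at ``moment control coming from the dilation virial identity'', but the virial relation only gives $\int(\tfrac{t^{k+1}}{k+1})^2u_0^2=\lambda(0)/(k+2)$, and it is not at all clear how to convert this into the sharp bound $\int(f')^2u_0^2<1$ without further quantitative input. For the global statement you are aiming at strict convexity of $\lambda$ on all of $\R$; you yourself flag the uniform-in-$\alpha$ bound as ``the hard part, and main obstacle'', and indeed nothing in your outline comes close to establishing it. Global convexity is \emph{strictly stronger} than what the theorem asserts (unique minimum), and the paper neither proves nor claims it; so your main route may simply be aiming at a harder target than necessary. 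Your fallback (``seeking a contradiction via positivity of $u_{\alpha_*}$'') is too vague to assess.

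The paper's argument is of a completely different, quantitative flavour. It never tries to show $\lambda''>0$ everywhere. Instead it uses the perturbation-theoretic criterion that at any critical point $\alpha_c$ with $\tfrac{k+2}{k+6}\,\eigtwo{\Qk(\alpha_c)}>\eigone{\Qk(\alpha_c)}$ one has $\lambda''(\alpha_c)>0$; explicit trial-state upper bounds on $\eigone{\Qk(0)}$ and commutator/comparison lower bounds on $\eigtwo{\Qk(\alpha)}$ then force every critical point in a computable interval $(0,\alpha^*)$ to be a non-degenerate local minimum, which is impossible since $0$ already is one. A second quantitative step (lower bounds on $\eigone{\Qk(\alpha)}$ for large $\alpha$, via comparison with the de~Gennes operator) rules out global minima beyond an explicit threshold. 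Matching these two regions requires splitting into $2\le k\le 68$ (verified by the explicit constants) and $k\ge 70$ (a sharper lower bound on $\lambda_2$). If you want to salvage your approach, you would need either a genuine proof of the uniform Dirichlet bound or, more realistically, to replace the attempted global convexity by this kind of ``all critical points in a range are local minima, hence none exist'' argument supplemented by large-$\alpha$ control.
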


\begin{remark}
This extends the previous results and discussions
in~\cite{helf,hepe}, where similar results were obtained for odd $k$. 
The non-degeneracy was proved in~\cite{hepe}. In that paper it was also
shown that Theorem~\ref{thm:main} is valid for large even $k$. The fact that
the minimum is attained at $\alpha=0$ was suggested by numerical computations
done by V. Bonnaillie--No\"{e}l. 
\end{remark}

Combining our 
Theorem~\ref{thm:main} with the results of ~\cite{helf,hepe} we get the 
following complete answer.

\begin{thm}
For any $k \in {\mathbb N}\setminus\{0\}$, the function 
$\alpha \mapsto \eigone{\Qk(\alpha)}$ attains a unique minimum. Moreover, 
this minimum is non-degenerate.
\end{thm}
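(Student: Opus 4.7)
The plan is simply to combine Theorem~\ref{thm:main} with the earlier results from the literature, case-splitting on the parity of $k$.

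For even $k\geq 2$, Theorem~\ref{thm:main} already provides the full conclusion: $\eigone{\Qk(\alpha)}$ has a unique minimum at $\alpha=0$ and this minimum is non-degenerate. No further argument is needed in this case.

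For odd $k$ (including the Montgomery case $k=1$), I would invoke the results from~\cite{helf,hepe}, where both uniqueness of the minimum and its non-degeneracy have been established. Here the reflection symmetry $\alpha\mapsto -\alpha$ enjoyed by even $k$ is absent (since $k+1$ is even and the substitution $t\to -t$ leaves $t^{k+1}$ unchanged), and consequently the minimum is attained at some $\alpha_c\neq 0$ rather than at $\alpha=0$; nevertheless, the cited results yield exactly the statement required.

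Since every $k\in\mathbb{N}\setminus\{0\}$ is either even and at least $2$, or odd, these two cases exhaust the theorem. The proof is therefore essentially a bookkeeping matter: all the technical content is absorbed into Theorem~\ref{thm:main} on the one hand and into the references on the other. I do not anticipate any genuine obstacle, apart from verifying that the cited literature really does deliver both uniqueness of the global minimum (not merely of a local one) and non-degeneracy for every odd $k\geq 1$, so that nothing is left uncovered when the two cases are glued together.
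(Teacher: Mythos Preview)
Your proposal is correct and matches the paper's own approach exactly: the paper simply states that the theorem follows by combining Theorem~\ref{thm:main} (for even $k$) with the results of~\cite{helf,hepe} (for odd $k$), without giving any further argument. Your parity case-split is precisely this combination, so there is nothing to add.
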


The paper is organized as follows. In Section~\ref{sec:aux} we give several 
spectral bounds on the first two eigenvalues of $\Qk(\alpha)$. These estimates
are used to prove Theorem~\ref{thm:main} for $2\leq k\leq 68$ in 
Section~\ref{sec:proofsmallk} and for $k\geq 70$ in 
Section~\ref{sec:prooflargek}.

\section{Auxiliary results}
\label{sec:aux}
\subsection{Introduction}
In this section we collect several spectral bounds that will help us in
proving Theorem~\ref{thm:main}. In the following, we assume that $k$ denotes 
a positive even integer.

With the scaling $s=\alpha^{-1/(k+1)}t$ it becomes clear that the form domain
of $\Qk(\alpha)$ is independent of $\alpha$. Thus, we are allowed to use the 
machinery of analytic perturbation theory.

First we note that $\Qk(\alpha)$ and $\Qk(-\alpha)$ are unitarily equivalent
(map $t\mapsto-t$ along with $\alpha\mapsto-\alpha$). This implies that the
function $\alpha\mapsto\eigone{\Qk(\alpha)}$ is even, and hence has a
critical point at $\alpha=0$. It is proved in~\cite{hepe} that this critical
point is a nondegenerate minimum. This also follows from our estimates below.

\begin{lemma}\label{lem:Virial}
If $\ac$ is a critical point of $\eigone{\Qk(\alpha)}$, then 
\[
\int_{-\infty}^{+\infty} 
\Bigl(\frac{t^{k+1}}{k+1}-\ac\Bigr)u_{\ac}(t)^2\,dt = 0
\]
and 
\[
\int_{-\infty}^{+\infty} 
\Bigl(\frac{t^{k+1}}{k+1}-\ac\Bigr)^2u_{\ac}(t)^2\,dt =  \frac{\eigone{\Qk(\ac)}}{k+2}.
\]
\end{lemma}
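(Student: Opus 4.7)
The plan is to derive both identities from standard soft tools: the first is the Feynman--Hellmann formula in the parameter $\alpha$ evaluated at the critical point, and the second is a virial-type identity obtained by inserting a dilated copy of $u_{\ac}$ as a trial function for $\Qk(\ac)$. The critical-point hypothesis is then used a second time in the virial computation to eliminate a cross term, leaving a clean proportionality between $\langle V\rangle$ and the eigenvalue.

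First I would invoke the scaling observation $s = \alpha^{-1/(k+1)} t$ recorded just before the lemma: it shows that the form domain of $\Qk(\alpha)$ is $\alpha$-independent, so $\alpha \mapsto \Qk(\alpha)$ is an analytic family of type (B). Since the ground state is simple and isolated, both $\alpha \mapsto \eigone{\Qk(\alpha)}$ and $\alpha \mapsto u_\alpha$ are real-analytic, and Feynman--Hellmann gives
\[
\frac{d}{d\alpha} \eigone{\Qk(\alpha)} = \Bigl\langle u_\alpha, \bigl(\partial_\alpha \Qk(\alpha)\bigr) u_\alpha \Bigr\rangle = -2 \int_\R \Bigl(\frac{t^{k+1}}{k+1} - \alpha\Bigr) u_\alpha(t)^2 \, dt.
\]
Evaluating at $\alpha = \ac$ delivers the first identity.

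For the second identity I would use the trial function $u_\lambda(t) = \sqrt{\lambda}\, u_{\ac}(\lambda t)$, which is $L^2$-normalized for every $\lambda > 0$. A change of variables gives
\[
\bigl\langle u_\lambda, \Qk(\ac) u_\lambda\bigr\rangle = \lambda^2 \int_\R |u_{\ac}'(s)|^2\,ds + \int_\R \Bigl(\frac{s^{k+1}}{\lambda^{k+1}(k+1)} - \ac\Bigr)^2 u_{\ac}(s)^2\,ds,
\]
and because $u_{\ac}$ is the ground state of $\Qk(\ac)$, this function of $\lambda$ is minimized at $\lambda = 1$. Setting $V(t) = \bigl(\frac{t^{k+1}}{k+1} - \ac\bigr)^2$, the algebraic identity
\[
t V'(t) = 2(k+1) V(t) + 2(k+1)\ac \Bigl(\frac{t^{k+1}}{k+1} - \ac\Bigr)
\]
(which follows from $t \cdot \frac{d}{dt}\bigl(\frac{t^{k+1}}{k+1}\bigr) = t^{k+1}$) turns the vanishing of the $\lambda$-derivative at $\lambda = 1$ into
\[
2\int_\R |u_{\ac}'|^2\,dt = 2(k+1)\int_\R V\, u_{\ac}^2\,dt + 2(k+1)\ac \int_\R \Bigl(\frac{t^{k+1}}{k+1} - \ac\Bigr) u_{\ac}^2\,dt.
\]
The last term vanishes by the first identity, and combining the resulting relation with the energy identity $\int |u_{\ac}'|^2\,dt + \int V u_{\ac}^2\,dt = \eigone{\Qk(\ac)}$ yields $(k+2)\int V u_{\ac}^2\,dt = \eigone{\Qk(\ac)}$, as claimed.

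The main obstacle is bookkeeping rather than substance: one must justify analyticity of the eigenpair, differentiation under the integral with respect to $\lambda$, and absence of boundary terms in the change of variables. Since the potential is polynomial and confining, $u_{\ac}$ is smooth with super-polynomial decay of all derivatives, so these points are routine.
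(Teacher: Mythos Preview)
Your proposal is correct and follows precisely the approach indicated in the paper's sketch: Feynman--Hellmann for the first identity and a scaling/virial argument for the second, with the cross term killed by the first identity. In fact you have supplied the details the paper omits (it only cites \cite{hepe} for them), and your computation checks out line by line.
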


\begin{proof}[Sketch of proof]
The first identity, usually referred to as the Feynman--Hellmann formula, 
follows from first order perturbation theory,
\[
\frac{\partial}{\partial\alpha}\eigone{\Qk(\alpha)} = -2\int_{-\infty}^{+\infty} 
\Bigl(\frac{t^{k+1}}{k+1}-\alpha\Bigr)u_{\alpha}(t)^2\,dt.
\]
The second is a virial type identity and is proved by 
scaling. We refer to~\cite{hepe} for the details.
\end{proof}

\subsection{Positive second derivative}

A key element in our approach is the following Lemma~\ref{lem:possecdiff}, 
which can be used to rule out local maxima under appropriate estimates on the 
first eigenvalues.

\begin{lemma}[Lemma~2.3 in~\cite{hepe}]
\label{lem:possecdiff}
If $\ac$ is a critical point of $\eigone{\Qk(\alpha)}$ and
\[
\frac{k+2}{k+6}\eigtwo{\Qk(\ac)}>\eigone{\Qk(\ac)}
\]
then
\[
\frac{\partial^2}{\partial\alpha^2}
\eigone{\Qk(\alpha)}\Big|_{\alpha=\ac}>0.
\]
\end{lemma}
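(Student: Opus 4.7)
The plan is to apply second-order analytic perturbation theory to $\alpha \mapsto \eigone{\Qk(\alpha)}$ at the critical point $\ac$, and then convert the resulting lower bound into the stated spectral condition using the two identities of Lemma~\ref{lem:Virial}. Since the form domain of $\Qk(\alpha)$ is $\alpha$-independent (by the scaling remark preceding this lemma), analytic perturbation theory applies: $\alpha \mapsto \eigone{\Qk(\alpha)}$ is real-analytic near $\ac$, and a smoothly varying normalized eigenfunction is available.

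First I would compute the two derivatives of the operator in $\alpha$. Both $\partial_\alpha \Qk(\alpha) = -2\bigl(\tfrac{t^{k+1}}{k+1}-\alpha\bigr)$ and $\partial_\alpha^2 \Qk(\alpha) = 2$ act as multiplication operators. Writing $f_\ac(t) = \bigl(\tfrac{t^{k+1}}{k+1}-\ac\bigr) u_\ac(t)$ and letting $\{\varphi_j\}_{j\geq 1}$ be an orthonormal eigenbasis of $\Qk(\ac)$ (so $\varphi_1 = u_\ac$), the Rayleigh--Schr\"odinger formula gives
\[
\frac{\partial^2}{\partial\alpha^2}\eigone{\Qk(\alpha)}\Big|_{\alpha=\ac}
= 2 \;-\; 8\sum_{j\geq 2}\frac{|\langle f_\ac,\varphi_j\rangle|^2}{\lambda_{j,\Qk(\ac)}-\eigone{\Qk(\ac)}}.
\]
The Feynman--Hellmann identity of Lemma~\ref{lem:Virial} is exactly the statement that $\langle f_\ac, u_\ac\rangle = 0$, which is what allows the sum to start at $j=2$ and thereby makes the bound below meaningful.

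Next, replacing each denominator $\lambda_{j,\Qk(\ac)}-\eigone{\Qk(\ac)}$ by the smallest one $\eigtwo{\Qk(\ac)}-\eigone{\Qk(\ac)}$ and applying Parseval (using $f_\ac \perp u_\ac$) yields
\[
\sum_{j\geq 2}\frac{|\langle f_\ac,\varphi_j\rangle|^2}{\lambda_{j,\Qk(\ac)}-\eigone{\Qk(\ac)}}
\;\leq\; \frac{\|f_\ac\|^2}{\eigtwo{\Qk(\ac)}-\eigone{\Qk(\ac)}}.
\]
The virial identity (second formula of Lemma~\ref{lem:Virial}) then evaluates $\|f_\ac\|^2 = \eigone{\Qk(\ac)}/(k+2)$, producing the concrete lower bound
\[
\frac{\partial^2}{\partial\alpha^2}\eigone{\Qk(\alpha)}\Big|_{\alpha=\ac}
\;\geq\; 2 \;-\; \frac{8\,\eigone{\Qk(\ac)}}{(k+2)\bigl(\eigtwo{\Qk(\ac)}-\eigone{\Qk(\ac)}\bigr)}.
\]

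Finally, a direct rearrangement shows that the stated hypothesis $\tfrac{k+2}{k+6}\eigtwo{\Qk(\ac)} > \eigone{\Qk(\ac)}$ is equivalent to $(k+2)\bigl(\eigtwo{\Qk(\ac)}-\eigone{\Qk(\ac)}\bigr) > 4\,\eigone{\Qk(\ac)}$, which makes the above right-hand side strictly positive, concluding the proof. There is no substantive obstacle: the only delicate point is the applicability of analytic perturbation theory, which is guaranteed by the fixed form domain. The identification of the exact factors ($8$ in the perturbation formula, $k+2$ from the virial identity) is what produces the precise constant $(k+2)/(k+6)$ in the hypothesis, so the computation is essentially tight given this method.
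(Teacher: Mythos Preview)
Your proof is correct and essentially identical to the paper's: the paper phrases the second-order term via the regularized resolvent $(\Qk(\ac)-\eigone{\Qk(\ac)})^{-1}$ and bounds its norm by $(\eigtwo{\Qk(\ac)}-\eigone{\Qk(\ac)})^{-1}$, which is precisely your eigenbasis expansion with the denominators replaced by the smallest gap and Parseval applied. The use of the Feynman--Hellmann and virial identities from Lemma~\ref{lem:Virial}, and the final algebra producing the constant $(k+2)/(k+6)$, match exactly.
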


We give a sketch of the proof for the sake of completeness.

\begin{proof}[Sketch of proof]
The proof is based on perturbation theory. The second derivative of 
$\eigone{\Qk(\alpha)}$ is given by
\[
\frac{\partial^2}{\partial\alpha^2}\eigone{\Qk(\alpha)}
=
2-4\int_{-\infty}^{+\infty} \Bigl(\frac{t^{k+1}}{k+1}-\alpha\Bigr)u_\alpha
\bigl(\partial_\alpha u_\alpha\bigr)\,dt.
\]
Here
\[
\partial_\alpha u_\alpha = -2(\Qk(\alpha)-\eigone{\Qk(\alpha)})^{-1}
\Bigl(\frac{t^{k+1}}{k+1}-\alpha\Bigr)u_\alpha,
\]
where the inverse is the regularized resolvent. The rest of the proof uses
Lemma~\ref{lem:Virial}, the bound 
\[
\|(\Qk(\ac)-\eigone{\Qk(\ac)})^{-1}\|
\leq (\eigtwo{\Qk(\ac)}-\eigone{\Qk(\ac)})^{-1},
\]
and the Cauchy-Schwarz inequality.
\end{proof}

To apply Lemma~\ref{lem:possecdiff} we need good upper bounds on 
$\eigone{\Qk(\alpha)}$ and lower bounds on $\eigtwo{\Qk(\alpha)}$. These will 
be presented in the sections below.

\subsection{Upper bounds}
We will at several points need upper bounds on the first eigenvalue of
$\Qk(\alpha)$. They are given in this section.

\begin{lemma}
\label{lem:criticalub}
Assume that $\ac$ is a critical point of 
$\alpha\mapsto\eigone{\Qk(\alpha)}$. Then, for all $\alpha\in\R$ it holds
that
\[
\eigone{\Qk(\alpha)}\leq \eigone{\Qk(\ac)}+(\alpha-\ac)^2.
\]
\end{lemma}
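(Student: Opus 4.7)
The plan is to use $u_{\ac}$ as a trial function in the min-max characterization of $\eigone{\Qk(\alpha)}$ and then exploit the Feynman--Hellmann identity (the first identity in Lemma~\ref{lem:Virial}) to kill the cross term.

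First, I would write the potential of $\Qk(\alpha)$ by splitting
\[
\frac{t^{k+1}}{k+1}-\alpha = \Bigl(\frac{t^{k+1}}{k+1}-\ac\Bigr) + (\ac-\alpha),
\]
so that squaring gives the operator identity
\[
\Qk(\alpha) = \Qk(\ac) + 2(\ac-\alpha)\Bigl(\frac{t^{k+1}}{k+1}-\ac\Bigr) + (\alpha-\ac)^2.
\]
This is the key algebraic step; it is exact, not an inequality, and it depends on the quadratic nature of the potential in $\alpha$.

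Next I would take expectations against $u_{\ac}$, which is $L^2$-normalized. Since $u_{\ac}$ is the ground state of $\Qk(\ac)$, the first term contributes $\eigone{\Qk(\ac)}$, the last contributes $(\alpha-\ac)^2$, and the cross term equals $2(\ac-\alpha)\int(\frac{t^{k+1}}{k+1}-\ac)\,u_{\ac}(t)^2\,dt$, which vanishes by Lemma~\ref{lem:Virial} because $\ac$ is a critical point.

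Finally, by the variational (Rayleigh--Ritz) principle,
\[
\eigone{\Qk(\alpha)} \leq \langle u_{\ac},\Qk(\alpha)u_{\ac}\rangle = \eigone{\Qk(\ac)} + (\alpha-\ac)^2,
\]
which is exactly the desired bound. There is no real obstacle here; the only thing to check is that $u_{\ac}$ lies in the form domain of $\Qk(\alpha)$, which is immediate from the earlier observation that the form domain of $\Qk(\alpha)$ is independent of $\alpha$.
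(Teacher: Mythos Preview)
Your proposal is correct and follows essentially the same approach as the paper: insert $u_{\ac}$ into the quadratic form of $\Qk(\alpha)$, expand the potential, and use the Feynman--Hellmann identity from Lemma~\ref{lem:Virial} to eliminate the cross term. The paper's proof is stated more tersely, but the content is identical.
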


\begin{proof}
This follows by inserting the eigenfunction $u_{\ac}$ corresponding to 
$\eigone{\Qk(\ac)}$ of $\Qk(\ac)$
into the quadratic form corresponding to $\Qk(\alpha)$ and using 
Lemma~\ref{lem:Virial}
\end{proof}

\begin{lemma}
\label{lem:trial}
For all $\alpha\geq 0$ it holds that
\[
\eigone{\Qk(\alpha)} \leq \alpha^2 + A_k,
\]
with
\[
A_k=
\begin{cases}
\frac{2^{3/2}}{9}
\bigl(\frac{4\pi^6-210\pi^4+4410\pi^2-26775}{7}\bigr)^{1/4},& k=2,\\
\frac{\pi^2}{4}\frac{k+2}{k+1}
\bigl(\frac{1}{4}(k+1)(2k+3)(2k+4)(2k+5)\bigr)^{-1/(k+2)}, & k\geq 2.\\
\end{cases}
\]
\end{lemma}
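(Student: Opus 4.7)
My strategy is to apply the Rayleigh--Ritz variational principle with an explicit even trial function. The starting observation is that because $k$ is even, $t^{k+1}$ is odd, so for any even $L^2$-normalised $u$ the cross term in the expansion of $\bigl(\tfrac{t^{k+1}}{k+1}-\alpha\bigr)^2$ vanishes, giving $\langle u,\Qk(\alpha)u\rangle = \langle u,\Qk(0)u\rangle + \alpha^2$. It therefore suffices to exhibit, for each $k\geq 2$, an even trial $u$ for which $\langle u,\Qk(0)u\rangle \leq A_k$.

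For the general branch ($k\geq 2$) I would use the rescaled Dirichlet cosine
\[
u_L(t) = \tfrac{1}{\sqrt{L}}\cos\bigl(\tfrac{\pi t}{2L}\bigr)\mathbf 1_{[-L,L]}(t),
\]
which is even, lies in $H^1(\R)$ with $\|u_L\|_2=1$, and satisfies $\int|u_L'|^2\,dt = \pi^2/(4L^2)$. After the substitution $s=t/L$ the potential contribution equals $\tfrac{L^{2k+2}}{(k+1)^2}\int_{-1}^1 s^{2k+2}\cos^2\bigl(\tfrac{\pi s}{2}\bigr)\,ds$. The key trick is the elementary bound
\[
\cos\bigl(\tfrac{\pi s}{2}\bigr) = \sin\bigl(\tfrac{\pi(1-|s|)}{2}\bigr) \leq \tfrac{\pi(1-|s|)}{2}, \qquad s\in[-1,1],
\]
coming from $\sin x\leq x$, which converts the transcendental integral into a Beta integral:
\[
\int_{-1}^1 s^{2k+2}\cos^2\bigl(\tfrac{\pi s}{2}\bigr)\,ds \leq \tfrac{\pi^2}{2}B(2k+3,3) = \tfrac{\pi^2}{(2k+3)(2k+4)(2k+5)}.
\]
Minimising $\pi^2/(4L^2) + L^{2k+2}\pi^2/[(k+1)^2(2k+3)(2k+4)(2k+5)]$ over $L>0$ is elementary, with optimum $L_\ast^{2(k+2)} = (k+1)(2k+3)(2k+4)(2k+5)/4$; substituting back yields exactly the general-branch expression for $A_k$.

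For $k=2$ this bound is not sharp enough to produce the specific closed form $\tfrac{2^{3/2}}{9}\bigl(\tfrac{4\pi^6-210\pi^4+4410\pi^2-26775}{7}\bigr)^{1/4}$, and the $\sin x\leq x$ loss must be removed. I would refine the trial (keeping the same scaling $u_L(t)=L^{-1/2}\phi(t/L)\mathbf 1_{[-L,L]}$) by enlarging $\phi$ into a small finite-dimensional subspace, for example $\Span\{\cos(\pi s/2),\cos(3\pi s/2)\}$, compute the relevant $s^6$-weighted inner products exactly via repeated integration by parts (using $\sin(\pm n\pi)=0$, $\cos(\pm n\pi)=(-1)^n$), minimise the resulting $2\times 2$ Rayleigh--Ritz quadratic form, and finally optimise in $L$. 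The polynomial in $\pi^2$ in the stated $A_2$ is then produced explicitly by this computation.

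The main obstacle is the $k=2$ case: the general argument is essentially one elementary trigonometric inequality plus routine one-variable optimisation, whereas the sharper $k=2$ bound requires tracking a specific transcendental integral exactly and carrying out the corresponding finite-dimensional Rayleigh--Ritz analysis. The algebra for $k=2$ is mechanical but lengthy, and identifying the correct refined trial (rather than performing the computation itself) is the only non-mechanical step.
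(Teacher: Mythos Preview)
Your general-branch argument is correct and is essentially the computation behind the result the paper quotes from~\cite{hepe}; the paper itself does not reproduce it but simply refers to that earlier lemma for $k\geq 4$. The reduction $\langle u,\Qk(\alpha)u\rangle=\langle u,\Qk(0)u\rangle+\alpha^2$ via the parity of $t^{k+1}$, the $\sin x\le x$ trick turning the $s^{2k+2}\cos^2(\pi s/2)$ integral into a Beta integral, and the optimisation in $L$ all go through exactly as you describe and reproduce the stated $A_k$.

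The gap is in the $k=2$ case. The trial space $\Span\{\cos(\pi s/2),\cos(3\pi s/2)\}$ you propose will \emph{not} produce the constant in the statement: a two--dimensional Rayleigh--Ritz gives the smaller root of a $2\times 2$ secular equation, so the resulting upper bound, before optimising in the scale $L$, would carry a square root of a quadratic in the matrix entries; after optimising in $L$ the structure is then different from the clean form $(\text{polynomial in }\pi^2)^{1/4}$ appearing in $A_2$. What the paper actually does is stay one--dimensional but use a smoother single trial state,
\[
u(t)=\frac{2}{\sqrt{3\rho}}\cos^2\!\Bigl(\frac{\pi t}{2\rho}\Bigr)\,\mathbf 1_{\{|t|<\rho\}},
\]
whose kinetic energy is $\pi^2/(3\rho^2)$ and whose potential term, after writing $\cos^4(\pi t/(2\rho))=\tfrac18\bigl(3+4\cos(\pi t/\rho)+\cos(2\pi t/\rho)\bigr)$ and integrating $t^6$ against each piece by parts, is exactly
\[
\frac{4\pi^6-210\pi^4+4410\pi^2-26775}{252\,\pi^6}\,\rho^6.
\]
Minimising the sum in $\rho$ then yields the stated $A_2$. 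So the ``correct refined trial'' you flagged as the non-mechanical step is not an enlargement of the subspace but a replacement of $\cos$ by $\cos^2$; with that choice the computation is a direct (if tedious) exercise, and your claim that your two--mode ansatz would recover the stated polynomial is not warranted.
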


\begin{proof}
For $k\geq 4$ we refer to Lemma~3.1 in~\cite{hepe}. For $k=2$ we use the same
idea but with a different trial state. A calculation of the energy of the 
function
\[
u(t)=
\begin{cases}
\frac{2}{\sqrt{3\rho}}\cos^2\bigl(\frac{\pi t}{2\rho}\bigr), &|t|<\rho,\\
0, & |t|\geq \rho,\\
\end{cases}
\]
gives ($\|u\|=1$)
\[
\begin{aligned}
\eigone{\Q^{(2)}(\alpha)}
&\leq
\int_{-\infty}^{+\infty}|u'(t)|^2
+\Bigl(\frac{t^3}{3}-\alpha\Bigr)^2|u(t)|^2\,dt\\
&=
\alpha^2+\frac{\pi^2}{3\rho^2}
+\frac{4\pi^6-210\pi^4+4410\pi^2-26775}{252\pi^6}\rho^6.
\end{aligned}
\]
Minimizing in $\rho$, we get the bound
\begin{equation*}
\label{eq:zerobound}
\eigone{\Q^{(2)}(\alpha)}\leq \alpha^2
+\frac{2^{3/2}}{9}
\Bigl(\frac{4\pi^6-210\pi^4+4410\pi^2-26775}{7}\Bigr)^{1/4}
\leq \alpha^2+ 0.6642,
\end{equation*}
attained for 
\[
\rho=2^{1/4}\pi
\Bigl(\frac{4\pi^6-210\pi^4+4410\pi^2-26775}{7}\Bigr)^{-1/8}
\approx 2.57.
\]
\end{proof}

The upper bound given in Lemma~\ref{lem:trial} is graphed (for $\alpha=0$ and
$2\leq k\leq 70$) in Figure~\ref{fig:lambda1comp} on 
page~\pageref{fig:lambda1comp}.

\begin{lemma}
\label{lem:increasingub}
The function
\[
k\mapsto \frac{\pi^2}{4}\frac{k+2}{k+1}
\Bigl(\frac{1}{4}(k+1)(2k+3)(2k+4)(2k+5)\Bigr)^{-1/(k+2)}
\]
appearing in Lemma~\ref{lem:trial} is increasing for $k\geq 2$. In particular
it is always bounded from above by $\pi^2/4$.
\end{lemma}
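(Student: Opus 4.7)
The plan is to take logarithms and reduce the monotonicity claim to a single scalar inequality. Setting
\[
g(k)=\tfrac{1}{4}(k+1)(2k+3)(2k+4)(2k+5), \qquad L(k)=\ln g(k),
\]
the function in question reads
\[
f(k)=\frac{\pi^2}{4}\cdot\frac{k+2}{k+1}\exp\!\Bigl(-\frac{L(k)}{k+2}\Bigr).
\]
Differentiating $\ln f$ and clearing $(k+2)^2$, the arithmetic identity $(k+2)^2/(k+1)=(k+2)+(k+2)/(k+1)$ collapses two of the four terms and leaves
\[
(k+2)^2(\ln f)'(k)=L(k)-R(k),\qquad R(k):=(k+2)L'(k)+\frac{k+2}{k+1},
\]
so it suffices to prove $L(k)>R(k)$ for every $k\geq 2$.

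The decisive step is to put $R(k)$ in closed form. Starting from $L'(k)=\frac{1}{k+1}+\frac{2}{2k+3}+\frac{1}{k+2}+\frac{2}{2k+5}$ and rewriting each of the four summands of $(k+2)L'(k)$ in the form $1+(\text{small})$, the four ones contribute a constant $4$ and the remainders combine to
\[
R(k)=5+\frac{2}{k+1}+\frac{2}{(2k+3)(2k+5)}.
\]
Both non-constant terms are positive and strictly decreasing, so $R$ is strictly decreasing on $[2,\infty)$ with $R(k)\leq R(2)=359/63<5.7$.

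On the other hand, $g$ is a polynomial with positive coefficients, so $L$ is strictly increasing, and $L(k)\geq L(2)=\ln 378>5.7$, which reduces to the elementary check $e^{5.7}<299<378$. Combining the two estimates gives $L(k)>R(k)$ for all $k\geq 2$, establishing the monotonicity.

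For the uniform bound $f(k)<\pi^2/4$, I would use that $(k+2)/(k+1)\to 1$ and $L(k)/(k+2)=\Oh\bigl((\ln k)/k\bigr)\to 0$, so $f(k)\to \pi^2/4$; combined with monotonicity this forces $f(k)<\pi^2/4$ for every $k\geq 2$. The only mildly delicate step is the closed-form simplification of $R(k)$, but once this is carried out the numerical margin at $k=2$ is comfortable and I do not anticipate any further obstacle.
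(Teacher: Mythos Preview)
Your argument is correct. Both you and the paper compute the logarithmic derivative of $f$ and reduce the monotonicity to a positivity statement verified at $k=2$, but you organize the algebra much more efficiently. The paper clears \emph{all} denominators, writing the numerator of $(\ln f)'$ as a cubic $a_3k^3+a_2k^2+a_1k+a_0$ over $(k+1)(k+2)^2(2k+3)(2k+5)$, where each $a_j$ still contains $\log\bigl(2(k+1)(k+2)(2k+3)(2k+5)\bigr)$; it then bounds each $a_j$ from below by its value at $k=2$ and checks that the resulting numerical cubic $3.73k^3+10.69k^2-5.02k-17.98$ is positive for $k\geq 2$. Your approach multiplies only by $(k+2)^2$, which isolates $L(k)-R(k)$, and your closed form $R(k)=5+\tfrac{2}{k+1}+\tfrac{2}{(2k+3)(2k+5)}$ makes $R$ visibly decreasing while $L$ is visibly increasing; a single comparison at $k=2$ then finishes it. This buys a cleaner proof with a transparent numerical margin ($\ln 378\approx 5.93$ versus $R(2)\approx 5.70$), whereas the paper's coefficient-by-coefficient bound obscures this structure. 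The limit argument for the $\pi^2/4$ bound is identical in both.
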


\begin{proof}
We will in the proof consider $k$ to be a real variable. Taking the logarithmic
derivative of the expression, we get
\[
\frac{a_3k^3+a_2k^2+a_1k+a_0}{(k+1)(k+2)^2(2k+3)(2k+5)}
\]
with (here we note that each term is increasing with $k$ and thus estimate
from below with $k=2$)
\[
\begin{aligned}
a_3 &= 4 \log (2 (k+1) (k+2) (2 k+3) (2 k+5))-20-8 \log 2\\
  &\geq 4\log 378-20\geq 3.73,\\
a_2 &= 20 \log (2 (k+1) (k+2) (2 k+3) (2 k+5))-108-40 \log 2\\
  &\geq 20 \log 378-108\geq 10.69,\\
a_1 &= 31 \log (2 (k+1) (k+2) (2 k+3) (2 k+5))-189-62 \log 2\\
  &\geq 31\log 378-189 \geq -5.02,\\
a_0 &= 15 \log (2 (k+1) (k+2) (2 k+3) (2 k+5))-107-30 \log 2\\
  &\geq 15\log 378-107\geq -17.98.
\end{aligned}
\]
Now, the polynomial
\[
p(k)=3.73k^3+10.69k^2-5.02k-17.98
\]
satisfies
\[
p(2)\approx 44.58\quad\text{and}\quad p'(k)=11.19k^2+21.38k-5.02.
\]
Since $p'(k)>0$ for $k\geq 2$ we find that $p$ is positive for $k\geq 2$. This 
implies that the function in the statement is increasing. The final part follows
since the limit as $k\to+\infty$ is $\pi^2/4$.
\end{proof}

\subsection{Lower bounds}
To be able to use Lemma~\ref{lem:possecdiff} we need lower bounds on the 
second eigenvalue. The following function will appear in the bounds.
\begin{lemma}
\label{lem:cmaxformula}
It holds that
\begin{equation}
\label{eq:cmaxformula}
\begin{aligned}
h(a)&:=\max_{0<\sigma<1} (1-\sigma^2)^{a/(a+2)}\sigma^{2/(a+2)}(a/2)^{4/(a+2)}\\
&= 2^{-4/(a+2)} a^{(a+4)/(a+2)} (a+1)^{1/(a+2)-1}.
\end{aligned}
\end{equation}
Moreover, $\lim_{a\to+\infty}h(a)=1$.
\end{lemma}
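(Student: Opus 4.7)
The plan is to treat this as an elementary calculus optimization: the factor $(a/2)^{4/(a+2)}$ is independent of $\sigma$, so I would concentrate on maximizing $g(\sigma) := (1-\sigma^2)^{a/(a+2)}\sigma^{2/(a+2)}$ on $(0,1)$. Since $g(0)=g(1)=0$ and $g>0$ on the open interval, there must be an interior maximum, which I would locate by computing the logarithmic derivative
\[
\frac{g'(\sigma)}{g(\sigma)} = \frac{a}{a+2}\cdot\frac{-2\sigma}{1-\sigma^2} + \frac{2}{a+2}\cdot\frac{1}{\sigma}.
\]
Setting the numerator of the combined fraction to zero yields $1-\sigma^2 = a\sigma^2$, i.e.\ the critical point $\sigma_*^2 = 1/(a+1)$, which is unique in $(0,1)$ and therefore automatically the global maximum.

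Substituting $\sigma_*^2 = 1/(a+1)$ (so $1-\sigma_*^2 = a/(a+1)$) back into $g$ gives
\[
g(\sigma_*) = \Bigl(\frac{a}{a+1}\Bigr)^{a/(a+2)}\Bigl(\frac{1}{a+1}\Bigr)^{1/(a+2)}
= \frac{a^{a/(a+2)}}{(a+1)^{(a+1)/(a+2)}}.
\]
Multiplying by $(a/2)^{4/(a+2)}$ combines the powers of $a$ into $a^{(a+4)/(a+2)}$, produces the factor $2^{-4/(a+2)}$, and leaves $(a+1)^{-(a+1)/(a+2)} = (a+1)^{1/(a+2)-1}$, matching the stated closed form.

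For the asymptotic statement I would rewrite
\[
h(a) = 2^{-4/(a+2)}\cdot a^{3/(a+2)}\cdot\Bigl(\frac{a}{a+1}\Bigr)^{(a+1)/(a+2)},
\]
noting that the last factor equals $\bigl[(1-1/(a+1))^{a+1}\bigr]^{1/(a+2)}$, so its base stays bounded while the exponent $1/(a+2)$ tends to $0$; hence this factor, together with $2^{-4/(a+2)}$ and $a^{3/(a+2)} = e^{3(\log a)/(a+2)}$, each converges to $1$ as $a\to+\infty$.

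There is no real obstacle here; the only place to be careful is the bookkeeping of exponents when collecting factors of $a+1$, which is purely algebraic. The second-derivative check for the maximum can be skipped by appealing to the boundary behavior of $g$ and uniqueness of the critical point.
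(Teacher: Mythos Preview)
Your proposal is correct and follows essentially the same route as the paper: locate the unique critical point $\sigma=1/\sqrt{a+1}$ by differentiation, conclude it is the maximum from the vanishing of the function at the endpoints, and substitute back to obtain the closed form. Your treatment of the limit, splitting $h(a)$ into three factors each tending to $1$, is in fact more explicit than the paper's, which merely records $h'(a)$ and appeals to a ``simple analysis''.
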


\begin{proof}
Differentiating $(1-\sigma^2)^{a/(a+2)}\sigma^{2/(a+2)}(a/2)^{4/(a+2)}$ with
respect to $\sigma$ gives
\[
\frac{2^{(a-2)/(a+2)}a^{4/(a+2)}\sigma^{-a/(a+2)} 
\bigl(1-\sigma^2\bigr)^{-2/(a+2)} 
\bigl(1-(a+1)\sigma^2\bigr)}{a+2},
\]
with the unique zero (in $0<\sigma<1$) at $\sigma=1/\sqrt{a+1}$. Since the 
function is zero
at the endpoints and positive for $0<\sigma<1$ this must be the maximum.
This proves~\eqref{eq:cmaxformula}

The rest follows by a simple analysis of the right hand side 
of~\eqref{eq:cmaxformula}. The derivative equals
\[
h'(a)=\frac{2^{-4/(a+2)} a^{2/(a+2)} (a+1)^{1/(a+2)-1}
\bigl[a \bigl(4+4\log 2 - 2 \log a-\log(a+1)\bigr)+8\bigr]}{(a+2)^2}.
\]
\end{proof}

\begin{lemma}
\label{lem:commutator}
For all real $\alpha$ and all even $k\geq 2$ it holds that
\[
\Qk(\alpha) \geq h(k)
\biggl[-\frac{d^2}{dt^2}+\Bigl(\frac{t^{k/2}}{k/2}\Bigr)^2\biggr],
\]
where $h$ is the function from Lemma~\ref{lem:cmaxformula}.
\end{lemma}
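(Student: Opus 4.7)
The plan is to combine a commutator/completing-the-square estimate with a unitary dilation. Write $p=-id/dt$ and $f(t)=t^{k+1}/(k+1)$, so that $[p,f-\alpha]=-if'(t)=-it^k$. For any $\sigma\in(0,1)$ and any $u$ in the form domain of $\Qk(\alpha)$, expanding the trivial inequality $\|(\sigma p-i(f-\alpha))u\|^2\geq 0$ and using the commutator identity (equivalently, integrating by parts the cross term) yields
\[
\sigma^2\|pu\|^2+\|(f-\alpha)u\|^2 \;\geq\; \sigma\int_{-\infty}^{+\infty} t^k|u(t)|^2\,dt.
\]
Adding $(1-\sigma^2)\|pu\|^2$ to both sides gives the form inequality $\Qk(\alpha)\geq S_\sigma$, where $S_\sigma:=(1-\sigma^2)(-d^2/dt^2)+\sigma\,t^k$.

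Next, $S_\sigma$ has the structure $A(-d^2/dt^2)+Bt^k$ with $A=1-\sigma^2$ and $B=\sigma$, and under the unitary dilation $(U_\mu u)(t)=\mu^{1/2}u(\mu t)$ it transforms into $(A/\mu^2)(-d^2/dt^2)+B\mu^k t^k$. Choosing $\mu$ so that this becomes a positive scalar multiple of the reference operator $R:=-d^2/dt^2+(t^{k/2}/(k/2))^2=-d^2/dt^2+(4/k^2)t^k$ forces $\mu^{k+2}=4(1-\sigma^2)/(k^2\sigma)$ and gives $U_\mu S_\sigma U_\mu^{-1}=C(\sigma)R$ with
\[
C(\sigma)=(1-\sigma^2)^{k/(k+2)}\sigma^{2/(k+2)}(k/2)^{4/(k+2)}.
\]
By Lemma~\ref{lem:cmaxformula}, $\max_{\sigma\in(0,1)}C(\sigma)=h(k)$, attained at $\sigma^2=1/(k+1)$, so for this optimal $\sigma$ the auxiliary operator $S_\sigma$ is unitarily equivalent to $h(k)R$.

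Combining: the form bound $\Qk(\alpha)\geq S_\sigma$ together with the unitary equivalence $S_\sigma\cong h(k)R$ yields the stated comparison (in particular $\lambda_n(\Qk(\alpha))\geq h(k)\lambda_n(R)$ for every $n$ via the min--max principle). The main obstacle is the scaling book-keeping in the second step, where $\mu$ must be chosen precisely so that the prefactor matches the function $h$ from Lemma~\ref{lem:cmaxformula}; the commutator step of the first paragraph is standard, and the optimization in $\sigma$ has already been performed in that lemma.
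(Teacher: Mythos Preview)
Your argument is correct and follows essentially the same route as the paper: the paper also computes the commutator $[p,f-\alpha]=-it^k$, derives the form inequality $\Qk(\alpha)\geq -(1-\sigma^2)\,d^2/dt^2+\sigma t^k$ (phrased there as ``Cauchy--Schwarz plus weighted arithmetic--geometric mean'', which is exactly your completing-the-square $\|(\sigma p-i(f-\alpha))u\|^2\geq 0$), and then scales the variable and invokes Lemma~\ref{lem:cmaxformula}. You have simply written out the scaling and the constant $C(\sigma)$ explicitly, and you correctly note that the resulting operator inequality is up to the unitary dilation, so that what is really obtained (and all that is used later) is the eigenvalue comparison $\lambda_n(\Qk(\alpha))\geq h(k)\,\lambda_n(R)$.
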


\begin{proof}
Let $\mathfrak{A}=-i\frac{d}{dt}$ and 
$\mathfrak{B}=\bigl(\frac{t^{k+1}}{k+1}-\alpha\bigr)$. Then
the commutator $[\mathfrak{A},\mathfrak{B}]$ equals
\[
[\mathfrak{A},\mathfrak{B}]=-it^k.
\]
With the Cauchy--Schwarz inequality and the weighted arithmetic-geometric mean 
inequality, we find that (for all $0<\sigma<1$)
\[
\Qk(\alpha)\geq -(1-\sigma^2)\frac{d^2}{dt^2}+\sigma t^k 
= -(1-\sigma^2)\frac{d^2}{dt^2}+\sigma(k/2)^2 \Bigl(\frac{t^{k/2}}{k/2}\Bigr)^2.
\]
Scaling the variable and invoking Lemma~\ref{lem:cmaxformula} gives the result.
\end{proof}

\begin{lemma}
\label{lem:lb21}
Let $h$ be the function in Lemma~\ref{lem:cmaxformula}. For all real $\alpha$ 
and all even $k\geq 2$ it holds that
\[
\eigtwo{\Qk(\alpha)}\geq B_k,
\]
with
\[
B_k=h(k)\frac{3^{2k/(k+2)}(k+2)}{2^{(2k-2)/(k+2)}k^{(k+4)/(k+2)}}
=\frac{3^{\frac{2k}{k+2}}(k+2)}{2^{\frac{2k+2}{k+2}}(k+1)^{\frac{k+1}{k+2}}}.
\]
\end{lemma}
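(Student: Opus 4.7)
The plan is to reduce the estimate on $\eigtwo{\Qk(\alpha)}$ to a three-dimensional radial Schr\"odinger eigenvalue problem and then apply an uncertainty principle.

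First, Lemma~\ref{lem:commutator} gives $\Qk(\alpha) \geq h(k)\tilde Q$, where I set $\tilde Q := -d^2/dt^2 + (t^{k/2}/(k/2))^2$; by the min--max principle it then suffices to prove
\[
\eigtwo{\tilde Q} \geq \frac{3^{2k/(k+2)}(k+2)}{2^{(2k-2)/(k+2)}k^{(k+4)/(k+2)}}.
\]
The potential of $\tilde Q$ is even, so its second eigenfunction is odd and $\eigtwo{\tilde Q}$ equals the infimum of the Rayleigh quotient over odd $u \in H^1(\R)$. Writing such a $u$ as $u(t) = t\, w(|t|)$ and integrating by parts (using $u(0)=0$ and decay at infinity) one verifies
\[
\int_{\R}|u'|^2\,dt = 2\!\int_0^\infty r^2 (w')^2\,dr, \qquad \int_{\R} u^2\,dt = 2\!\int_0^\infty r^2 w^2\,dr,
\]
together with the analogous identity for $\int V(t)u^2\,dt$. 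Hence the Rayleigh quotient of $\tilde Q$ on odd functions coincides with that of the radial three-dimensional Schr\"odinger operator $-\Delta_{\R^3} + V(|x|)$ with $V(|x|) = 4|x|^k/k^2$; dropping the radial restriction in the minimization yields the (still sufficient) bound
\[
\eigtwo{\tilde Q} \;\geq\; \eigone{-\Delta_{\R^3} + \tfrac{4}{k^2}|x|^k}.
\]

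The 3D eigenvalue on the right is then estimated via an uncertainty principle. Integration by parts in $\R^3$ gives the virial identity $2\,\mathrm{Re}\!\int \overline{w}\,(x\cdot\nabla w)\,dx = -3\|w\|^2$, and combined with Cauchy--Schwarz this yields $\|\nabla w\|\cdot\||x|w\| \geq (3/2)\|w\|^2$. H\"older's inequality $\||x|w\|^2 \leq \||x|^{k/2}w\|^{4/k}\|w\|^{(2k-4)/k}$ (valid for $k\geq 2$) upgrades this to the generalized uncertainty
\[
\|\nabla w\|^{2k/(k+2)}\,\||x|^{k/2}w\|^{4/(k+2)} \;\geq\; (3/2)^{2k/(k+2)}\|w\|^2,
\]
and a weighted AM--GM with weights $k/(k+2)$ and $2/(k+2)$, applied to $\|\nabla w\|^2$ and $(4/k^2)\||x|^{k/2}w\|^2$, produces the claimed constant after simplification.

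The main obstacle is the bookkeeping of constants in the final H\"older/AM--GM step. Conceptually, the factor $3^{2k/(k+2)}$ in $B_k$ (which a naive application of 1D Hardy, giving only the bound $\min_t[1/(4t^2)+V(t)]$, completely misses) is accounted for by the constant $(3/2)^2 = 9/4$ supplied by the 3D uncertainty principle, in place of the $1/4$ coming from the 1D Hardy inequality.
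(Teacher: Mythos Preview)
Your proof is correct and arrives at exactly the paper's constant, but the route is genuinely different from the paper's own argument.

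Both arguments begin identically, invoking Lemma~\ref{lem:commutator} to reduce matters to $\eigtwo{\tilde Q}$ with $\tilde Q=-\frac{d^2}{dt^2}+\bigl(\tfrac{2}{k}t^{k/2}\bigr)^2$. From there the paper proceeds much more directly: it uses convexity of $s\mapsto s^{k/2}$ to bound the potential from below by its tangent line (in the variable $s=t^2$) at a point $T$, namely
\[
\Bigl(\tfrac{2}{k}t^{k/2}\Bigr)^2\;\ge\;\tfrac{2}{k}T^{k-2}t^2-\tfrac{2k-4}{k^2}T^k,
\]
which reduces $\tilde Q$ to a shifted harmonic oscillator whose second eigenvalue is $3\sqrt{2/k}\,T^{(k-2)/2}$; one then optimizes over $T$ (the optimum being $T=(3\sqrt{2k}/4)^{2/(k+2)}$). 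Your argument instead passes through the odd-sector/3D-radial correspondence, applies the three-dimensional Heisenberg inequality $\|\nabla w\|\,\||x|w\|\ge\tfrac32\|w\|^2$, then H\"older and weighted AM--GM. The two are in fact the same argument in disguise: the constant $3/2$ in the 3D uncertainty is, via the radial map, precisely the statement that the second harmonic-oscillator eigenvalue is $3\omega$; your H\"older step is equivalent to the paper's convexity/tangent-line bound; and your AM--GM plays the role of the optimization in $T$. This explains why the two computations yield the identical $B_k$. The paper's version is shorter and avoids the detour through $\R^3$; your version, on the other hand, makes the provenance of the factor $3^{2k/(k+2)}$ transparent, as you point out in your final remark.
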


\begin{proof}
Let $T>0$. We use the estimate
\[
\Bigl(\frac{t^{k/2}}{k/2}\Bigr)^2 
\geq \frac{2}{k}T^{k-2}t^2-\frac{2k-4}{k^2}T^k,
\]
valid for all $t\in\R$.
Comparing with the harmonic oscillator, and using Lemma~\ref{lem:commutator}, 
we get the required estimate for the second eigenvalue. The optimal choice
of $T$ is
\begin{equation}
\label{eq:optT}
T=\Bigl(\frac{3\sqrt{2k}}{4}\Bigr)^{2/(k+2)}.
\end{equation}
\end{proof}

The lower bound of $\eigtwo{\Qk(\alpha)}$ in Lemma~\ref{lem:lb21} will tend 
to $9/4$ as $k\to+\infty$, which compared to the limit $\pi^2/4$ for the
first eigenvalue is not good enough. Our next aim is to improve this lower 
bound on $\eigtwo{\Qk(\alpha)}$ for large $k$.

\begin{lemma}
\label{lem:betterl2}
Assume that $k\geq 70$ is even and $\alpha\in\R$. Then
\[
\eigtwo{\Qk(\alpha)}\geq \widetilde{B}_k,
\]
with
\[
\widetilde{B}_k=
\frac{\sqrt{5}-1}{2}
\Biggl(\frac{\pi-\arctan
\Bigl(\sqrt{\frac{(\pi/1.1)^2}{1.1^{70}-(\pi/1.1)^2}}\Bigr)}{1.1}\Biggr)^2
\geq 4.719.
\]
\end{lemma}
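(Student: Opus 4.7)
The plan is to bound $\Qk(\alpha)$ from below by an $\alpha$-independent finite square-well operator, and then to bound the second eigenvalue of that well via the standard transcendental equation for its lowest odd bound state. The structure of $\widetilde{B}_k$, with half-width $a=1.1$, depth $V_0=1.1^{70}$, and the characteristic form $\pi-\arctan(k/\kappa)$ subject to $k^2+\kappa^2=V_0$, identifies the comparison uniquely.

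First, I would aim to derive the $\alpha$-independent operator inequality
\[
\Qk(\alpha)\;\geq\;\frac{\sqrt{5}-1}{2}\Bigl(-\frac{d^2}{dt^2}+W(t)\Bigr),
\]
where $W(t)=0$ for $|t|<1.1$ and $W(t)=1.1^{70}$ otherwise, uniformly in $\alpha\in\R$ and $k\geq 70$. The prefactor $(\sqrt{5}-1)/2=1/\phi$ with $\phi=(1+\sqrt{5})/2$ is naturally produced by the sharp Young-type inequality $(A-B)^2\geq\frac{1}{\phi}A^2-\phi B^2$ (tight at $A=\phi^2 B$, using the identity $\phi^2=\phi+1$), applied with $A=t^{k+1}/(k+1)$ and $B=\alpha$. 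Combining this with the trivial bound $V_\alpha\geq 0$, the uniform effective depth $V_0 = 1.1^{70}$ would emerge as the worst case over $k\geq 70$, together with a case split handling large $|\alpha|$, where the well bottom $t_0=((k+1)\alpha)^{1/(k+1)}$ exits $[-1.1,1.1]$ and a local harmonic-oscillator analysis near $t_0$ forces a large spectrum directly.

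Second, for $H_W=-\frac{d^2}{dt^2}+W$, the ground state is even and the first excited (second) eigenstate is odd. An odd bound state with energy $E=k^2$ has the form $\sin(kt)$ inside the well and $\mathrm{sgn}(t)Ce^{-\kappa|t|}$ outside, with $\kappa=\sqrt{V_0-E}$. Matching logarithmic derivatives at $t=a$ yields
\[
ka=\pi-\arctan(k/\kappa),\qquad k^2+\kappa^2=V_0,\quad k,\kappa>0.
\]
Since the right-hand side is strictly decreasing in $k$ on $(0,\sqrt{V_0})$, iterating once from the infinite-well guess $k_\infty=\pi/a$ produces a strict under-approximation
\[
k_1:=\frac{1}{a}\Bigl(\pi-\arctan\bigl(k_\infty/\sqrt{V_0-k_\infty^2}\bigr)\Bigr)<k,
\]
so $E\geq k_1^2$. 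Substituting $a=1.1$ and $V_0=1.1^{70}$ reproduces exactly the bracketed quantity in $\widetilde{B}_k$.

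Combining the two steps yields $\eigtwo{\Qk(\alpha)}\geq(1/\phi)k_1^2=\widetilde{B}_k$, and the bound $\widetilde{B}_k\geq 4.719$ is a direct numerical computation. The main obstacle is Step~1: justifying the $\alpha$-independent operator inequality with the exact prefactor $1/\phi$ and the precise well parameters $(a,V_0)$. In particular, the large-$|\alpha|$ regime, where the Young-type estimate loses its sign through the $-\phi\alpha^2$ term, has to be treated by a separate argument that exploits the fact that the potential minimum is then far from $[-1.1,1.1]$, so that the second eigenvalue is large for a completely different reason.
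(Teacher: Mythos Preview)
Your Step~2 is essentially the paper's argument: reduce to the lowest odd state of a finite square well of half-width $T=1.1$ and height $T^k$, write the matching condition $\tan(\pi-\sqrt{\lambda}\,T)=\sqrt{\lambda}/\omega$ with $\omega^2=T^k-\lambda$, and use the infinite-well upper bound $\lambda\le(\pi/T)^2$ to iterate once. The paper phrases this on the half-line with a Dirichlet condition at $0$, which is the same thing by parity.

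The gap is in Step~1. Your Young inequality $(A-B)^2\ge\frac{1}{\phi}A^2-\phi B^2$ acts only on the \emph{potential}; it leaves the kinetic term $-\frac{d^2}{dt^2}$ untouched and produces an additive $-\phi\alpha^2$, so it cannot yield the claimed operator inequality $\Qk(\alpha)\ge\frac{1}{\phi}\bigl(-\frac{d^2}{dt^2}+W\bigr)$ with the prefactor $\frac{1}{\phi}$ on the Laplacian, nor anything uniform in $\alpha$. The proposed case split for large $|\alpha|$ does not rescue this: the operator inequality you wrote is simply false with that justification, and patching the spectrum directly for large $|\alpha|$ would leave an uncontrolled intermediate range. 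The paper's mechanism is entirely different and much cleaner: with $\mathfrak{A}=-i\frac{d}{dt}$ and $\mathfrak{B}=\frac{t^{k+1}}{k+1}-\alpha$ one has $i[\mathfrak{A},\mathfrak{B}]=t^k\ge 0$ (here $k$ even is essential), and the standard commutator bound gives, for every $0<\sigma<1$,
\[
\Qk(\alpha)=\mathfrak{A}^2+\mathfrak{B}^2\;\ge\;(1-\sigma^2)\mathfrak{A}^2+\sigma\,t^k.
\]
Choosing $\sigma=\frac{\sqrt{5}-1}{2}$ so that $1-\sigma^2=\sigma$ yields $\Qk(\alpha)\ge\frac{\sqrt{5}-1}{2}\bigl(-\frac{d^2}{dt^2}+t^k\bigr)$, which is automatically $\alpha$-independent with no case analysis. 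Then $t^k\ge T^k\chi_{\{|t|>T\}}$ gives your square well, and your Step~2 applies verbatim. So the golden-ratio prefactor comes from equating $1-\sigma^2$ with $\sigma$ in a commutator estimate, not from a Young inequality on the potential.
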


\begin{proof}
We first do the commutator estimate
\[
\Qk(\alpha)\geq -(1-\sigma^2)\frac{d^2}{dt^2}+\sigma t^k 
= \frac{\sqrt{5}-1}{2}\Bigl(-\frac{d^2}{dt^2}+t^k\Bigr),
\]
where $\sigma$ in the latter step is chosen to be $\frac{\sqrt{5}-1}{2}$. Next
we note that the second eigenvalue of
\[
-\frac{d^2}{dt^2}+t^k
\]
in $L^2(\R)$ equals the first eigenvalue of the operator
\[
-\frac{d^2}{dt^2}+t^k
\]
in $L^2(\R^+)$ with Dirichlet condition at $t=0$. Let $T>1$. Then
\[
-\frac{d^2}{dt^2}+t^k \geq \Dk:=-\frac{d^2}{dt^2}+T^k\chi_{\{t>T\}},
\]
where we, again, impose a Dirichlet condition at $t=0$.
Here $\chi_D$ denotes the characteristic function of the set $D$.
Let us estimate the first eigenvalue $\eigone{\Dk}$ of $\Dk$. 
Clearly
\[
\eigone{\Dk}\leq \Bigl(\frac{\pi}{T}\Bigr)^2,
\]
which is what one gets considering $(0,T)$ and imposing a Dirichlet condition 
at $t=T$. The ground state of $\Dk$ is given by (in the rest of this proof we 
write $\lambda=\eigone{\Dk}$)
\[
u(t)=
\begin{cases}
c_1\sin(\sqrt{\lambda}t), & 0\leq t\leq T,\\
c_2e^{-\omega t}, & t\geq T,
\end{cases}
\]
where
\[
-\omega^2+T^k=\lambda
\]
and where we have the gluing conditions at $t=T$:
\[
\begin{aligned}
c_1\sin(\sqrt{\lambda}T)&=c_2 e^{-\omega T}\quad\text{and}\\
c_1\sqrt{\lambda}\cos(\sqrt{\lambda}T)&=-c_2 \omega e^{-\omega T}.
\end{aligned}
\]
This gives the equation (in $\sqrt{\lambda}$)
\[
\tan(\sqrt{\lambda}T)=-\frac{\sqrt{\lambda}}{\omega}\quad\text{i.e.}\quad
\tan(\pi-\sqrt{\lambda}T)=\frac{\sqrt{\lambda}}{\omega},
\]
which has a unique solution in the interval 
$\frac{\pi}{2T}<\sqrt{\lambda}<\frac{\pi}{T}$. We think of $T>1$ and $k$ 
large, so that $\sqrt{\lambda}/\omega$ is small, and get
\[
\frac{\sqrt{\lambda}}{\omega}=\sqrt{\frac{\lambda}{T^k-\lambda}}
\leq \sqrt{\frac{(\pi/T)^2}{T^k-(\pi/T)^2}}.
\]
And so by monotonicity
\[
\pi-\sqrt{\lambda}T\leq 
\arctan\biggl(\sqrt{\frac{(\pi/T)^2}{T^k-(\pi/T)^2}}\biggr),
\]
i.e.
\[
\lambda\geq 
\Biggl(\frac{\pi-\arctan
\Bigl(\sqrt{\frac{(\pi/T)^2}{T^k-(\pi/T)^2}}\Bigr)}{T}\Biggr)^2.
\]
Now, without optimizing, we find that with $T=1.1$ and $k\geq 70$ it holds
that
\[
\eigtwo{\Qk(\alpha)}\geq \frac{\sqrt{5}-1}{2}\lambda 
\geq \frac{\sqrt{5}-1}{2}
\Biggl(\frac{\pi-\arctan
\Bigl(\sqrt{\frac{(\pi/1.1)^2}{1.1^{70}-(\pi/1.1)^2}}\Bigr)}{1.1}\Biggr)^2
\geq 4.719.
\]
\end{proof}

We will also need lower bounds on $\eigone{\Qk(\alpha)}$ for large $\alpha$.
This is the content of the following two Lemmas.

\begin{lemma}
\label{lem:lb32}
For $\alpha\geq 3/2$ and even $k\geq 2$ it holds that
\begin{equation}
\label{eq:minbound}
\eigone{\Qk(\alpha)}\geq C_k,
\end{equation}
with
\[
C_k=
\min\Biggl(\Bigl(\frac{3}{2}-\frac{1}{k+1}\Bigr)^2,
\frac{\frac{3}{2}(k+1)-1}{(k+1)
\bigl(\bigl(\frac{3}{2}(k+1)\bigr)^{1/(k+1)}-1\bigr)}\Theta_0\Biggr).
\]
In particular, if $2\leq k\leq 68$ it holds that
\[
\eigone{\Qk(\alpha)}>\eigone{\Qk(0)}
\]
for all $\alpha\geq 3/2$.
\end{lemma}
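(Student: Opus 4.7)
My plan is to use Neumann--Dirichlet bracketing at the point $t=1$, bound each half-line piece separately, and then combine with the upper bound $A_k$ from Lemma~\ref{lem:trial} to handle the ``in particular'' assertion for $2\le k\le 68$.

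Bracketing gives $\eigone{\Qk(\alpha)}\ge\min(\mu_-,\mu_+)$, where $\mu_\pm$ are the first eigenvalues of $\Qk(\alpha)$ on $(-\infty,1]$ and $[1,\infty)$ with Neumann condition at $t=1$. For the left half-line the estimate is pointwise: since $k$ is even one has $t^{k+1}/(k+1)\le 1/(k+1)<\alpha$ for all $t\le 1$, so the potential is bounded below by $(\alpha-1/(k+1))^2\ge(3/2-1/(k+1))^2$, which is the first entry in $C_k$.

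For $[1,\infty)$ the key is convexity of $g(t):=t^{k+1}/(k+1)$. Let $t_0=((k+1)\alpha)^{1/(k+1)}$, so $g(t_0)=\alpha$. By convexity, for $t\in[1,t_0]$ the secant slope from $(t,g(t))$ to $(t_0,\alpha)$ dominates the secant slope from $(1,g(1))$ to $(t_0,\alpha)$, namely $c(\alpha):=(\alpha-1/(k+1))/(t_0-1)$; for $t\ge t_0$ the secant slope from $(t_0,\alpha)$ to $(t,g(t))$ is at least $g'(t_0)=t_0^k$, which in turn exceeds $c(\alpha)$ (again by convexity, since $c(\alpha)$ is an average of $g'$ over $[1,t_0]$). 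Hence $|g(t)-\alpha|\ge c(\alpha)|t-t_0|$ on $[1,\infty)$. Convexity also makes $\alpha\mapsto c(\alpha)$ increasing, so $c(\alpha)\ge c(3/2)$ for $\alpha\ge 3/2$. Consequently $\mu_+$ is bounded below by the first Neumann eigenvalue at $t=1$ of $-d^2/dt^2+c(3/2)^2(t-t_0)^2$; the rescaling $r=\sqrt{c(3/2)}(t-t_0)$ converts this into $c(3/2)\bigl(-d^2/dr^2+r^2\bigr)$ on a half-line with Neumann at the left endpoint, whose ground state is bounded below by $c(3/2)\,\Theta_0$ by the defining property of the de Gennes constant. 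Writing out $c(3/2)$ produces the second entry of $C_k$.

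The ``in particular'' statement then follows by combining the lower bound $C_k$ with Lemma~\ref{lem:trial}, which gives $\eigone{\Qk(0)}\le A_k$, and verifying $C_k>A_k$ by direct computation for each $2\le k\le 68$. This finite check is the reason for the numerical cutoff: as $k\to\infty$, $C_k\to 9/4$ while $A_k\to\pi^2/4>9/4$, so the argument fails for large $k$ (handled separately in Section~\ref{sec:prooflargek}). The main technical step is the convexity/secant estimate yielding a clean linear lower bound for $|g(t)-\alpha|$ uniformly on $[1,\infty)$, together with the monotonicity in $\alpha$ that allows one to evaluate the constant at the worst case $\alpha=3/2$; the reduction to a scaled de Gennes half-line problem and the numerical comparison with $A_k$ are then routine.
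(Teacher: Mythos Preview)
Your proof is correct and follows essentially the same route as the paper: Neumann bracketing at $t=1$, a pointwise potential bound on $(-\infty,1]$, a linear lower bound $|t^{k+1}/(k+1)-\alpha|\ge c\,|t-t_0|$ on $[1,\infty)$ leading to a scaled de~Gennes half-line problem, and a finite numerical check of $C_k>A_k$. The only cosmetic difference is that you obtain the linear lower bound via the three--secant inequality for the convex function $g(t)=t^{k+1}/(k+1)$, whereas the paper factors $t^{k+1}-b^{k+1}=(t-b)\sum_{j=0}^{k}t^{k-j}b^{j}$ and bounds the sum below by its geometric-series value at $t=1$, $\alpha=3/2$; both arguments produce the identical constant $c(3/2)$.
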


\begin{proof}
First we note that the potential $\bigl(\frac{t^{k+1}}{k+1}-\alpha\bigr)^2$ is 
decreasing for all $t<1$ (in fact for all $t<((k+1)\alpha)^{1/(k+1)}$), and thus 
it is greater than $(1/(k+1)-3/2)^2$ for all $t<1$ and all 
$\alpha\geq 3/2$.

For $t\geq1$ and $\alpha\geq 3/2$, we estimate
\begin{align*}
\Bigl(\frac{t^{k+1}}{k+1}-\alpha\Bigr)^2 
& = 
\frac{1}{(k+1)^2}
\Biggl(\sum_{j=0}^{k}t^{k-j}\bigl(\alpha(k+1)\bigr)^{1/(k+1)}\Biggr)^2
\Bigl(t- \bigl(\alpha(k+1)\bigr)^{1/(k+1)}\Bigr)^2\\
&\geq \frac{1}{(k+1)^2} 
\biggl(\frac{\frac32(k+1)-1}{\bigl(\frac32(k+1)\bigr)^{1/(k+1)}-1}\biggr)^2
\Bigl(t- \bigl(\alpha(k+1)\bigr)^{1/(k+1)}\Bigr)^2.
\end{align*}
Here we used that the expression in the big sum is increasing
both in $t$ and in $\alpha$, and then applied the formula for a geometric sum.

Thus, comparing with the minimum of the potential for $t<1$ and with the 
de~Gennes operator for $t\geq 1$ we conclude~\eqref{eq:minbound}.

The last part follows by comparing the upper bound in Lemma~\ref{lem:trial}
with the just obtained lower bound (and using the fact that $\Theta_0>0.59$ 
which is known from~\cite{bono}). This is done in Figure~\ref{fig:lambda1comp}.
\begin{figure}[h]
\centering
\includegraphics[width=0.7\textwidth]{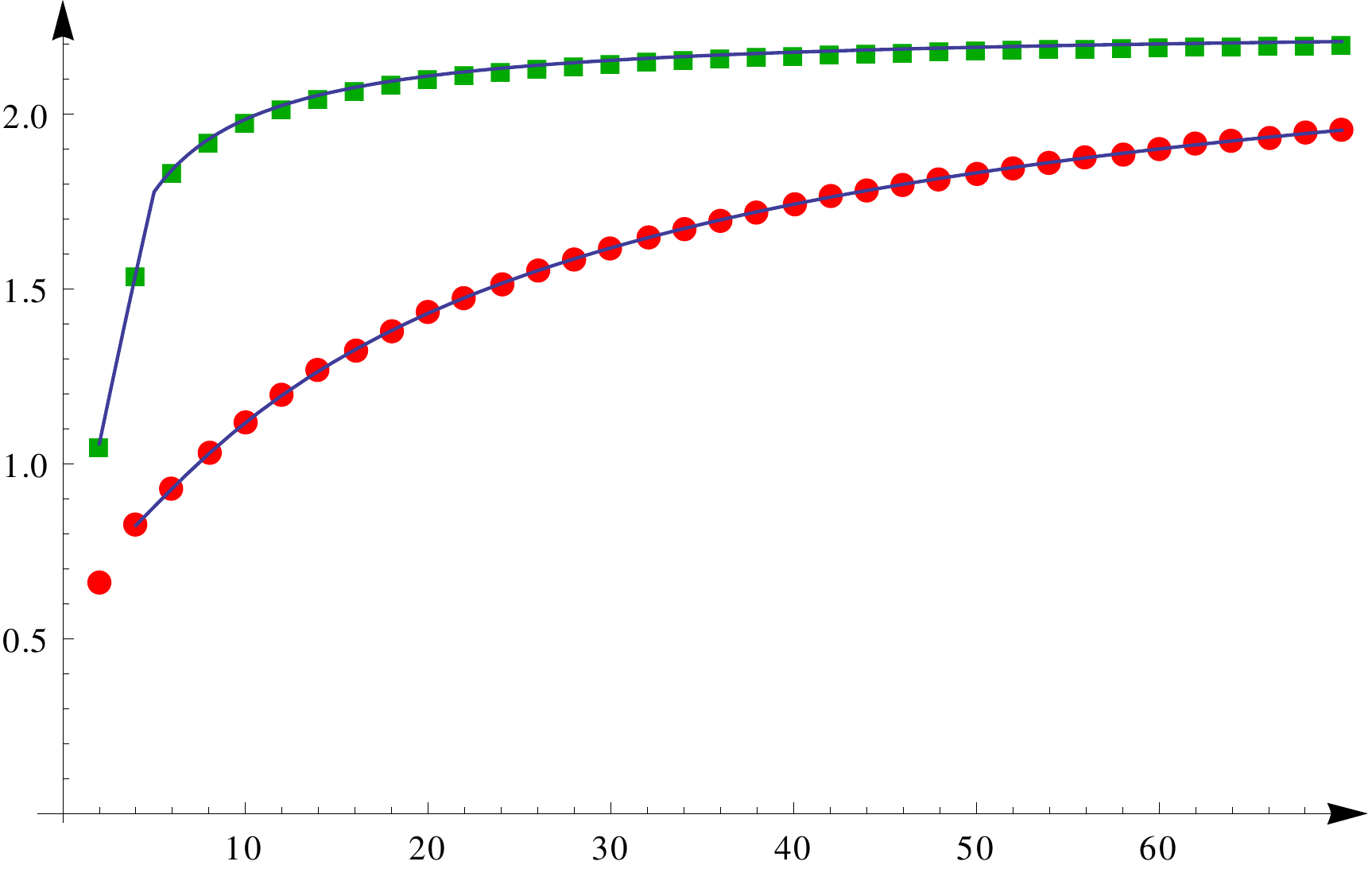}
\caption{The disks are the upper bounds on $\eigone{\Qk(0)}$ from 
Lemma~\ref{lem:trial} as a function of $k$. The squares 
are the lower bounds on $\eigone{\Qk(\alpha)}$, $\alpha\geq 3/2$, as a function 
of $k$, from Lemma~\ref{lem:lb32}.}
\label{fig:lambda1comp}
\end{figure}
\end{proof}

We need a better bound for large $k$ than the one given in Lemma~\ref{lem:lb32}.
We use instead $\alpha=2.8$ as lower bound and find that

\begin{lemma}
\label{lem:lb3}
For $\alpha\geq 2.8$ it holds that
\[
\eigone{\Qk(\alpha)}\geq
\min\Biggl(\Bigl(2.8-\frac{1}{k+1}\Bigr)^2,
\frac{2.8(k+1)-1}{(k+1)
\bigl(\bigl(2.8(k+1)\bigr)^{1/(k+1)}-1\bigr)}\Theta_0\Biggr).
\]
For $k\geq 70$ the first term is the smallest one, i.e.
\[
\eigone{\Qk(\alpha)}\geq \Bigl(2.8-\frac{1}{k+1}\Bigr)^2
\geq \Bigl(2.8-\frac{1}{71}\Bigr)^2
\geq 7.76
.
\]
In particular $\eigone{\Qk(\alpha)}$ cannot obtain its global minimum for 
$\alpha\geq 2.8$.
\end{lemma}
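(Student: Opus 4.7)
The plan is to follow the proof of Lemma~\ref{lem:lb32} essentially verbatim, with the constant $3/2$ replaced by $2.8$ everywhere, and then do the extra work to show that for $k \geq 70$ the first term in the minimum is the dominant (smaller) one and exceeds the known upper bound on $\eigone{\Qk(0)}$.

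For the main inequality, first I would split into the regions $t < 1$ and $t \geq 1$. On $t < 1$ I would observe, as in Lemma~\ref{lem:lb32}, that for $\alpha \geq 2.8$ we have $1 < ((k+1)\alpha)^{1/(k+1)}$, so the inner quantity $\tfrac{t^{k+1}}{k+1}-\alpha$ is monotone on $(-\infty,1)$; its absolute value is minimized as $t\to 1^-$, giving the bound $\bigl(2.8-\tfrac{1}{k+1}\bigr)^2$. On $t \geq 1$ I would use the same geometric-sum factorization as in Lemma~\ref{lem:lb32}, which together with the monotonicity of the sum in $t$ and $\alpha$ yields
\[
\Bigl(\frac{t^{k+1}}{k+1}-\alpha\Bigr)^2 \geq \frac{1}{(k+1)^2}
\biggl(\frac{2.8(k+1)-1}{(2.8(k+1))^{1/(k+1)}-1}\biggr)^2
\bigl(t-((k+1)\alpha)^{1/(k+1)}\bigr)^2.
\]
A scaling argument comparing the right-hand side to the de Gennes operator (with Neumann boundary condition), whose ground state energy is $\Theta_0$, produces the second entry of the minimum. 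The minimum of the two bounds is then the uniform lower bound on the potential, hence on $\eigone{\Qk(\alpha)}$.

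Next I would verify the claim that the first term is the smaller one for $k \geq 70$. The first term tends to $2.8^2 = 7.84$ as $k \to \infty$, while the second term behaves like $\tfrac{2.8(k+1)}{\log(2.8(k+1))}\Theta_0$ for large $k$ (since $(2.8(k+1))^{1/(k+1)}-1 \sim \log(2.8(k+1))/(k+1)$) and therefore grows without bound. A direct estimate at $k=70$ using $\Theta_0 > 0.59$ (from~\cite{bono}) shows the second term is already well above $7.76$; a monotonicity check on the relevant function of $k$ then extends this to all $k \geq 70$. This gives the explicit bound $\eigone{\Qk(\alpha)} \geq (2.8-\tfrac{1}{71})^2 \geq 7.76$.

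Finally, for the ``in particular'' statement, I would combine this lower bound with the upper bound $\eigone{\Qk(0)} \leq A_k$ from Lemma~\ref{lem:trial}, noting via Lemma~\ref{lem:increasingub} that $A_k < \pi^2/4 < 7.76$ for all $k \geq 2$. Since the value at $\alpha = 0$ is strictly below the uniform lower bound for $\alpha \geq 2.8$, the global minimum cannot occur in that range. I expect the main nuisance to be the numerical verification that the second term in the minimum actually exceeds the first at $k=70$: it requires a reasonably careful estimate of $(2.8\cdot 71)^{1/71}$, though the gap is large enough that any crude bound on $e^{\log(198.8)/71}-1$ suffices.
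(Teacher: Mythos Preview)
Your proposal is correct and follows essentially the same approach as the paper: the paper's proof simply says ``exactly the same as the proof of Lemma~\ref{lem:lb32}'' with $3/2$ replaced by $2.8$, then checks that the second term in the minimum is increasing in $k$ and already exceeds $21$ at $k=70$ (using $\Theta_0>0.59$), while the first term is below $2.8^2$, and finally invokes Lemma~\ref{lem:increasingub} to bound $\eigone{\Qk(0)}<\pi^2/4<7.76$. Your only slight imprecision is the phrase ``uniform lower bound on the potential'' for the combined estimate---the second term comes from a Neumann-bracketing comparison with the de~Gennes operator on $\{t\geq 1\}$, not from a pointwise potential bound---but your surrounding description makes clear you understand this.
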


\begin{proof}
The proof is exactly the same as the proof of Lemma~\ref{lem:lb32}. 
The second statement follows from
noticing that the second term in the minimum is increasing and that its value
at $k=70$ is
\[
\frac{2.8(70+1)-1}{(70+1)
\bigl(\bigl(2.8(70+1)\bigr)^{1/(70+1)}-1\bigr)}\Theta_0
\geq 21.2,
\]
while the first term in the minimum is less than $2.8^2=7.84$.

The last statement follows by using Lemma~\ref{lem:increasingub} to conclude 
that the upper bound on $\eigone{\Qk(0)}$ in Lemma~\ref{lem:trial} is less 
than $\pi^2/4$ for all $k$. Since $\pi^2/4$ is less than $7.76$ we are done.
\end{proof}

\section{Proof of Theorem~\ref{thm:main} for $2\leq k\leq 68$}
\label{sec:proofsmallk}

\begin{lemma}
\label{lem:zerotoa}
For each even $k$, $2\leq k\leq 68$, let
\[
\alpha^*=\sqrt{\frac{k+2}{k+6}B_k-A_k},
\]
where $A_k$ is the upper bound on $\eigone{\Qk(0)}$
from Lemma~\ref{lem:trial} and $B_k$ is the lower bound on 
$\eigtwo{\Qk(\alpha)}$ from Lemma~\ref{lem:lb21}.
Then, $\alpha\mapsto\eigone{\Qk(\alpha)}$ has
no critical point in the interval $0<\alpha<\alpha^*$.
\end{lemma}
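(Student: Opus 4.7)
The plan is an argument by contradiction. If some $\ac\in(0,\alpha^*)$ is a critical point, I will use Lemma~\ref{lem:possecdiff} to show that $\ac$ must be a strict local minimum of $\alpha\mapsto\eigone{\Qk(\alpha)}$; since $\alpha=0$ is also a critical point (by the symmetry $\alpha\mapsto-\alpha$ noted at the start of Section~\ref{sec:aux}) and, by the same estimate, a strict local minimum, continuity will force an intermediate local maximum between $0$ and $\ac$ that is itself a critical point in $(0,\alpha^*)$ --- contradicting the conclusion that every such critical point is a strict local minimum.

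First I would assemble the bounds needed to verify the hypothesis of Lemma~\ref{lem:possecdiff} uniformly on $[0,\alpha^*)$. Applying Lemma~\ref{lem:criticalub} at the critical point $\alpha=0$ together with the trial-state bound $\eigone{\Qk(0)}\leq A_k$ from Lemma~\ref{lem:trial} gives
\[
\eigone{\Qk(\alpha)}\leq \eigone{\Qk(0)}+\alpha^2\leq A_k+\alpha^2
\]
for every $\alpha\in\R$, while Lemma~\ref{lem:lb21} supplies $\eigtwo{\Qk(\alpha)}\geq B_k$ uniformly in $\alpha$. The definition of $\alpha^*$ rearranges to $A_k+(\alpha^*)^2=\frac{k+2}{k+6}B_k$, so for every $\alpha\in[0,\alpha^*)$ I obtain
\[
\eigone{\Qk(\alpha)}\leq A_k+\alpha^2<\frac{k+2}{k+6}B_k\leq \frac{k+2}{k+6}\eigtwo{\Qk(\alpha)},
\]
which is precisely the hypothesis of Lemma~\ref{lem:possecdiff}.

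It follows that at any critical point $\ac$ in $[0,\alpha^*)$ one has $\partial_\alpha^2\eigone{\Qk(\alpha)}|_{\alpha=\ac}>0$, and hence $\ac$ is a strict local minimum. The contradiction step is then a standard calculus observation: if both $0$ and $\ac$ are strict local minima of the continuous function $\eigone{\Qk(\cdot)}$ on $[0,\ac]$, the maximum of this function on the closed interval $[0,\ac]$ is attained at some interior point $\alpha_m\in(0,\ac)\subset(0,\alpha^*)$, which is necessarily a critical point. Applying the same estimate once more forces $\alpha_m$ to be a strict local minimum --- incompatible with its being an interior maximum of a non-constant continuous function. This contradiction will complete the proof.

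The only real obstacle is the \emph{a priori} verification that $(\alpha^*)^2=\frac{k+2}{k+6}B_k-A_k$ is strictly positive for every even $k$ with $2\leq k\leq 68$, so that the interval $(0,\alpha^*)$ is non-empty and the whole argument is not vacuous. This reduces to a finite explicit computation --- one inequality per value of $k$ --- using the closed-form expressions for $A_k$ and $B_k$ furnished by Lemmas~\ref{lem:trial} and~\ref{lem:lb21}.
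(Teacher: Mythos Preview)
Your argument is correct and essentially identical to the paper's: both verify the hypothesis of Lemma~\ref{lem:possecdiff} via the chain $\eigone{\Qk(\alpha)}\leq A_k+\alpha^2<\frac{k+2}{k+6}B_k\leq\frac{k+2}{k+6}\eigtwo{\Qk(\alpha)}$ for $\alpha\in[0,\alpha^*)$, and both conclude by noting that two non-degenerate local minima in a row is impossible. The only cosmetic difference is that you reach $\eigone{\Qk(\alpha)}\leq A_k+\alpha^2$ by combining Lemma~\ref{lem:criticalub} at $\ac=0$ with Lemma~\ref{lem:trial}, whereas the paper reads this bound off Lemma~\ref{lem:trial} directly (that lemma already holds for all $\alpha\geq0$, not just $\alpha=0$).
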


\begin{proof}
Assume, to get a contradiction, that $0<\alpha_c<\alpha^*$ is a critical point.
Then, invoking Lemma~\ref{lem:trial} and the definition
of $\alpha^*$ above, we find that
\[
\eigone{\Qk(\ac)}\leq A_k+\ac^2 <A_k+(\alpha^*)^2=\frac{k+2}{k+6}B_k \leq 
\frac{k+2}{k+6}\eigtwo{\Qk(\ac)},
\]
which by Lemma~\ref{lem:possecdiff} implies that $\ac$ is a non-degenerate 
local minimum. Hence all critical 
points in $0<\alpha<\alpha^*$ must be non-degenerate local minimums. Now we
know that zero is a non-degenerate local minimum. Since there cannot be
more than one such in a row we get a contradiction.
\end{proof}

\begin{lemma}
\label{lem:zeroto2a}
With $k$ and $\alpha^*$ as in the previous Lemma it holds that 
$\eigone{\Qk(\alpha)}$ cannot attain its global minimal value in the
interval $[\alpha^*,2\alpha^*)$.
\end{lemma}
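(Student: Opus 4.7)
The plan is to argue by contradiction: assume that $\ac\in[\alpha^*,2\alpha^*)$ attains the global minimum of $\alpha\mapsto\eigone{\Qk(\alpha)}$. Since this function is real-analytic (hence its critical points are isolated) and $\alpha=0$ is an established non-degenerate local minimum, the values just to the right of the origin strictly exceed $\eigone{\Qk(0)}$. Consequently, the maximum of $\eigone{\Qk(\cdot)}$ on the compact interval $[0,\ac]$ is attained at some interior point $\beta\in(0,\ac)$, which is automatically a critical point and a local maximum.

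My next step is to locate $\beta$. Lemma~\ref{lem:zerotoa} forbids critical points inside $(0,\alpha^*)$, forcing $\beta\in[\alpha^*,\ac)\subset[\alpha^*,2\alpha^*)$. In particular $|\beta-\ac|<\alpha^*$, and since $\ac$ is a global minimum, $\eigone{\Qk(\ac)}\leq\eigone{\Qk(0)}\leq A_k$ by Lemma~\ref{lem:trial}.

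The core of the argument is then to invoke Lemma~\ref{lem:criticalub} at the critical point $\ac$, combined with the definition $A_k+(\alpha^*)^2=\frac{k+2}{k+6}B_k$, to produce
\[
\eigone{\Qk(\beta)}\leq\eigone{\Qk(\ac)}+(\beta-\ac)^2<A_k+(\alpha^*)^2=\frac{k+2}{k+6}B_k\leq\frac{k+2}{k+6}\eigtwo{\Qk(\beta)},
\]
where the final inequality is Lemma~\ref{lem:lb21}. Lemma~\ref{lem:possecdiff} would then imply that $\beta$ is a non-degenerate local minimum, contradicting its status as a local maximum.

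I expect the main subtlety to lie in ensuring the existence of the intermediate local maximum $\beta$; this genuinely requires the non-degeneracy of the minimum at $0$ (so the function really rises to the right of the origin) together with analyticity (to rule out pathological situations such as the function being constant on a subinterval). Both facts are already available from Section~\ref{sec:aux}. The degenerate corner case $\ac=\alpha^*$ needs no curvature argument at all: then the interval $[\alpha^*,\ac)$ is empty, so the local maximum $\beta\in(0,\alpha^*)$ directly contradicts Lemma~\ref{lem:zerotoa}.
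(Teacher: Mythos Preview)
Your proof is correct and follows essentially the same approach as the paper: assume a global minimum at $\ac\in[\alpha^*,2\alpha^*)$, use Lemma~\ref{lem:criticalub} at $\ac$ together with $\eigone{\Qk(\ac)}\leq\eigone{\Qk(0)}\leq A_k$ and the definition of $\alpha^*$ to show that any critical point in $[\alpha^*,\ac)$ satisfies the hypothesis of Lemma~\ref{lem:possecdiff}, and then derive the ``two non-degenerate minima in a row'' contradiction. The paper states this last step more tersely, whereas you explicitly produce the intermediate local maximum $\beta$ and show it would have to be a non-degenerate minimum; this is a harmless (and arguably clearer) elaboration of the same idea.
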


\begin{proof}
Assume, to get a contradiction, that we have one $\alpha_c$ in this interval
where we have have a global minimum. Then, in particular, 
$\eigone{\Qk(\alpha_c)}\leq\eigone{\Qk(0)}$. Thus, combining again
Lemmas~\ref{lem:possecdiff} and~\ref{lem:criticalub} we find that any critical
point in $[\alpha^*,\alpha_c)$ must be a non-degenerate minimum. However, by
the previous Lemma we know that there are no critical points in $(0,\alpha^*)$,
and so again we would have two non-degenerate minimums in a row. Since that is
not possible we get a contradiction.
\end{proof}

\begin{lemma}
\label{lem:alb}
Assume that $2\leq k\leq 68$ is even. Denote by
\[
\alpha^{**}=\frac{3}{2}-\sqrt{C_k-A_k},
\]
where, again, $A_k$ is the upper bound on $\eigone{\Qk(0)}$ 
from Lemma~\ref{lem:trial} and $C_k$ is the lower bound on 
$\eigone{\Qk(\alpha)}$ from 
Lemma~\ref{lem:lb32}. 

If $\alpha>\alpha^{**}$
then $\eigone{\Qk(\alpha)}$ cannot attain its global minimum.
\end{lemma}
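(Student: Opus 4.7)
The plan is to proceed by contradiction. Suppose the global minimum of $\alpha\mapsto\eigone{\Qk(\alpha)}$ is attained at some point $\ac$ with $\ac>\alpha^{**}$; using the evenness of this function noted at the beginning of Section~\ref{sec:aux}, we may further assume $\ac\geq 0$. The strategy is to split into the two cases $\ac\geq 3/2$ and $\alpha^{**}<\ac<3/2$, and in each to derive a contradiction from the trial-function upper bound $\eigone{\Qk(0)}\leq A_k$ of Lemma~\ref{lem:trial}, the parabolic upper bound of Lemma~\ref{lem:criticalub}, and the lower bound of Lemma~\ref{lem:lb32}.

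The case $\ac\geq 3/2$ is essentially the last statement of Lemma~\ref{lem:lb32}: since $\ac$ realizes the global minimum we have $\eigone{\Qk(\ac)}\leq\eigone{\Qk(0)}\leq A_k$, while Lemma~\ref{lem:lb32} gives $\eigone{\Qk(\ac)}\geq C_k$, and the numerical inequality $C_k>A_k$ for $2\leq k\leq 68$ visualized in Figure~\ref{fig:lambda1comp} yields the desired contradiction.

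For the remaining case $\alpha^{**}<\ac<3/2$, observe that a global minimizer is in particular a critical point, so Lemma~\ref{lem:criticalub} applied at the test value $\alpha=3/2$ gives
\[
\eigone{\Qk(3/2)}\leq\eigone{\Qk(\ac)}+(3/2-\ac)^2\leq A_k+(3/2-\ac)^2,
\]
where the second inequality again uses $\eigone{\Qk(\ac)}\leq\eigone{\Qk(0)}\leq A_k$. Combining with the lower bound $\eigone{\Qk(3/2)}\geq C_k$ of Lemma~\ref{lem:lb32} and taking square roots (recalling $\ac<3/2$) yields $\ac\leq 3/2-\sqrt{C_k-A_k}=\alpha^{**}$, contradicting $\ac>\alpha^{**}$.

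There is no genuine obstacle here: once one notices that Lemma~\ref{lem:criticalub} lets one propagate the smallness of $\eigone{\Qk(\ac)}$ to the point $\alpha=3/2$, where the lower bound of Lemma~\ref{lem:lb32} becomes available, the argument collapses to a one-line inequality. The only ingredient beyond the three cited lemmas is the numerical fact $C_k>A_k$ for $2\leq k\leq 68$ already checked in Lemma~\ref{lem:lb32}.
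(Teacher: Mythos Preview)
Your proof is correct and follows essentially the same route as the paper: split into the cases $\ac\geq 3/2$ and $\alpha^{**}<\ac<3/2$, handle the first via the inequality $C_k>A_k$ from Lemma~\ref{lem:lb32}, and in the second use Lemma~\ref{lem:criticalub} at $\alpha=3/2$ together with $\eigone{\Qk(\ac)}\leq\eigone{\Qk(0)}\leq A_k$ to contradict the lower bound $C_k$. The only cosmetic difference is that the paper bounds $(3/2-\ac)^2<(3/2-\alpha^{**})^2=C_k-A_k$ to obtain $\eigone{\Qk(3/2)}<C_k$ directly, whereas you rearrange to conclude $\ac\leq\alpha^{**}$; the content is identical.
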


\begin{proof}
First we note that if $\alpha\geq 3/2$ then 
$\eigone{\Qk(\alpha)}\geq \eigone{\Qk(0)}$ by Lemma~\ref{lem:lb32}.
Assume, to get a contradiction, that $\eigone{\Qk(\alpha)}$ attains its global
minimum for a $\alpha^{**}<\alpha_c<3/2$. Then, by 
Lemma~\ref{lem:criticalub} it holds that
\[
\begin{aligned}
\eigone{\Qk(3/2)}&\leq \eigone{\Qk(\alpha_c)}+(\alpha_c-3/2)^2\\
&\leq \eigone{\Qk(0)}+(\alpha^{**}-3/2)^2\\
&< A_k+C_k-A_k=C_k.
\end{aligned}
\]
But this contradicts Lemma~\ref{lem:lb32}.
\end{proof}

The proof of Theorem~\ref{thm:main} is completed for $2\leq k\leq 68$ by 
ploting $2\alpha^*$ and 
$\alpha^{**}$ and noting that $2\alpha^{*}>\alpha^{**}$ for these $k$. This
is done in Figure~\ref{fig:completeproof}.
\begin{figure}
\centering
\includegraphics[width=0.7\textwidth]{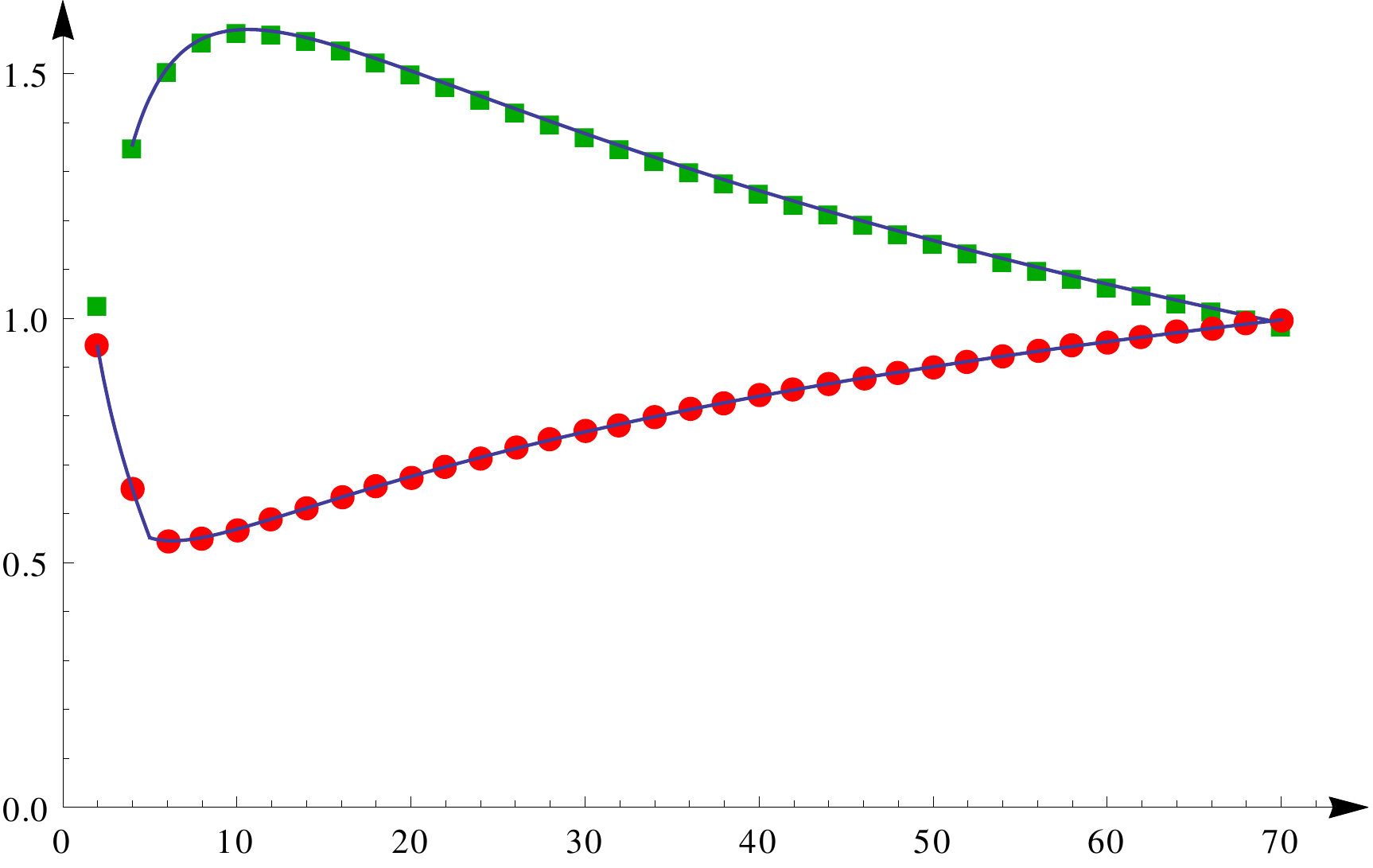}
\caption{The disks are $\alpha^{**}$ from Lemma~\ref{lem:alb}. The squares
are $2\alpha^*$, where $\alpha^*$ is defined in Lemma~\ref{lem:zerotoa}.}
\label{fig:completeproof}
\end{figure}

\section{Proof of Theorem~\ref{thm:main} for $k\geq 70$}
\label{sec:prooflargek}

\begin{lemma}
\label{lem:alpha1largek}
Assume that $k\geq 70$. Then $\eigone{\Qk(\alpha)}$ cannot have its global
minimum for $0<\alpha<2.83$.
\end{lemma}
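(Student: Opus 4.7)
The plan is to mirror the argument of Section~\ref{sec:proofsmallk} but with the sharper second-eigenvalue bound $\widetilde{B}_k$ from Lemma~\ref{lem:betterl2} in place of $B_k$, and with the uniform upper bound $A_k < \pi^2/4$ coming from Lemma~\ref{lem:increasingub}. Accordingly I would set
\[
\alpha^* := \sqrt{\tfrac{k+2}{k+6}\widetilde{B}_k - A_k}\,.
\]
The aim is to show that $2\alpha^* > 2.83$ for every even $k \geq 70$ and then re-run the proofs of Lemmas~\ref{lem:zerotoa} and~\ref{lem:zeroto2a} verbatim, with $\widetilde{B}_k$ replacing $B_k$.

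For the numerical step I would note that $\frac{k+2}{k+6} = 1 - \frac{4}{k+6}$ is increasing in $k$, so for $k \geq 70$ it is at least $72/76$. Combining with $\widetilde{B}_k \geq 4.719$ and $A_k < \pi^2/4 < 2.4675$ yields
\[
(\alpha^*)^2 \geq \tfrac{72}{76}\cdot 4.719 - \tfrac{\pi^2}{4} > 2.003,
\]
and hence $2\alpha^* > 2\sqrt{2.003} > 2.83$, as required.

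I would then reprove the two interval lemmas. For the analog of Lemma~\ref{lem:zerotoa}, if $\ac \in (0,\alpha^*)$ were a critical point then by Lemmas~\ref{lem:criticalub} and~\ref{lem:trial} and the definition of $\alpha^*$,
\[
\eigone{\Qk(\ac)} \leq A_k + \ac^2 < \tfrac{k+2}{k+6}\widetilde{B}_k \leq \tfrac{k+2}{k+6}\eigtwo{\Qk(\ac)},
\]
so Lemma~\ref{lem:possecdiff} forces $\ac$ to be a non-degenerate local minimum. The same inequality at $\ac=0$ shows that $\alpha=0$ is itself a non-degenerate minimum, and since two non-degenerate minima cannot be consecutive critical points of a smooth function of one real variable, no such $\ac$ can exist.

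For the analog of Lemma~\ref{lem:zeroto2a}, if the global minimum were attained at some $\alpha_c \in [\alpha^*, 2\alpha^*)$, then for any critical point $\beta \in [\alpha^*, 2\alpha^*)$ the estimate in Lemma~\ref{lem:criticalub}, applied at the critical point $\alpha_c$, yields
\[
\eigone{\Qk(\beta)} \leq \eigone{\Qk(\alpha_c)} + (\beta - \alpha_c)^2 < A_k + (\alpha^*)^2 = \tfrac{k+2}{k+6}\widetilde{B}_k,
\]
so $\beta$ is again a non-degenerate minimum by Lemma~\ref{lem:possecdiff}. Combined with the previous paragraph, we would have two consecutive non-degenerate minima of $\alpha\mapsto \eigone{\Qk(\alpha)}$, a contradiction. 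The only genuine obstacle is the numerical bookkeeping behind $2\alpha^*>2.83$; everything else is a line-by-line transcription of the small-$k$ argument, and the constants in Lemma~\ref{lem:betterl2} have been tuned precisely so that this estimate goes through from $k=70$ onward.
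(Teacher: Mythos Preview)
Your proposal is correct and matches the paper's proof essentially line for line: the paper defines the same $\alpha^*=\sqrt{\tfrac{k+2}{k+6}\widetilde{B}_k-A_k}$, invokes Lemma~\ref{lem:increasingub} to bound $A_k<\pi^2/4$ and Lemma~\ref{lem:betterl2} for $\widetilde{B}_k\geq 4.719$, and computes $2\alpha^*\geq 2\sqrt{\tfrac{72}{76}\cdot 4.719-\pi^2/4}\geq 2.83$, after which it simply says the rest ``follows the same lines'' as Lemmas~\ref{lem:zerotoa} and~\ref{lem:zeroto2a}. One small remark: in your analog of Lemma~\ref{lem:zerotoa} you cite Lemma~\ref{lem:criticalub}, but only Lemma~\ref{lem:trial} is actually needed there; and in the second step you might state explicitly that $\eigone{\Qk(\alpha_c)}\leq\eigone{\Qk(0)}\leq A_k$ is what justifies the strict inequality.
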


\begin{proof}
This follows the same lines as the proofs of Lemmas~\ref{lem:zerotoa} 
and~\ref{lem:zeroto2a}. We let
\[
\alpha^* = \sqrt{\frac{k+2}{k+6}\widetilde{B}_k-A_k},
\]
where $A_k$ is the upper bound on 
$\eigone{\Qk(0)}$ from Lemma~\ref{lem:trial} (which is increasing in $k$ by
Lemma~\ref{lem:increasingub}) and $\widetilde{B}_k$ is the lower bound on 
$\eigtwo{\Qk(\alpha)}$ from Lemma~\ref{lem:betterl2}.
For $k\geq 70$ we note that
\[
2\alpha^*\geq 2\sqrt{\frac{72}{76}\times 4.719-\frac{\pi^2}{4}}\geq
2.83.
\]
\end{proof}

Combining this result with Lemma~\ref{lem:lb3} we find that 
$\eigone{\Qk(\alpha)}$ cannot have its minimum attained for $\alpha>0$. This
proves Theorem~\ref{thm:main}.

\section*{Acknowledgements}
SF was partially supported by the Lundbeck
  Foundation, the Danish Natural Science Research Council and the European 
Research Council under the
 European Community's Seventh Framework Program (FP7/2007--2013)/ERC grant
 agreement  202859.

\bibliographystyle{abbrv}
\bibliography{FPS}

\end{document}